\newtheorem{theorem}{Theorem}[section]
\newtheorem{lemma}[theorem]{Lemma}
\theoremstyle{definition}
\newtheorem{remark}[theorem]{Remark}
\numberwithin{equation}{section}
\numberwithin{subsection}{section}
\newcommand{\R}{\mathbb{R}} 
\newcommand{\eps}{\varepsilon} 
\begin{document}
\title[Brakke's formulation of velocity]{Brakke's 
formulation of velocity and \\ the second order regularity property}
\author[R. Mori]{Ryunosuke Mori}
\address{Department of Mathematics, Tokyo Institute of Technology, 2-12-1 Ookayama, Meguro-ku, Tokyo 152-8551, Japan}
\email{45c136045@gmail.com}
\author[E. Tomimatsu]{Eita Tomimatsu}
\address{Department of Mathematics, Tokyo Institute of Technology, 2-12-1 Ookayama, Meguro-ku, Tokyo 152-8551, Japan}
\email{tomimatsu.e.aa@m.titech.ac.jp}
\author[Y. Tonegawa]{Yoshihiro Tonegawa}
\address{Department of Mathematics, Tokyo Institute of Technology, 2-12-1 Ookayama, Meguro-ku, Tokyo 152-8551, Japan}
\email{tonegawa@math.titech.ac.jp}

\thanks{Y.~Tonegawa is partially supported by JSPS Grant 18H03670, 19H00639 and 17H01092.
The authors thank the anonymous referee for reading the manuscript carefully.} 

\begin{abstract}
Suppose that a family of $k$-dimensional surfaces in $\mathbb R^n$ evolves by the motion law of $v=h+u^\perp$ in
the sense of Brakke's 
formulation of velocity,
where $v$ is the normal velocity vector, $h$ is the generalized mean curvature
vector and $u^\perp$ is the normal projection of a given vector field $u$
in a dimensionally sharp integrability class. When the flow is locally close to 
a time-independent $k$-dimensional plane in a weak sense of measure in space-time, it is 
represented as a graph of a $C^{1,\alpha}$ function over the plane. On the other hand,
it is not known if the graph satisfies the PDE of $v=h+u^\perp$ pointwise
in general. For this problem, when $k=n-1$ and under the additional assumption that the distributional
time derivative of the graph is a signed Radon measure, it is proved that
the graph satisfies the PDE pointwise. An application to a short-time 
existence theorem for a surface evolution problem is given. 
\end{abstract}

\maketitle

\section{Introduction}
A family of $k$-dimensional surfaces $\{M_t\}_{t\geq 0}$ in $\mathbb R^n$ is called the mean curvature flow (abbreviated hereafter as MCF) if the 
velocity $v$ is equal to the mean curvature vector $h$ of $M_t$ at each point on $M_t$. Given a smooth compact surface $M_0$, the MCF as the 
initial value problem is well-posed until some singularities such as vanishing and pinching appear. To extend the flow
past singularities, a number of generalized formulations of MCF have been proposed: we mention among others, the level-set flow \cite{CGG,ES}, Brakke flow \cite{Br78}
and BV solution \cite{Luckhaus}. The properties of these generalized MCFs and their relations have been studied by numerous
researchers for the last 40 years or so. The present paper is concerned with a subtle aspect on the formulation of velocity in the definition 
of the Brakke flow. Within this framework, the velocity of the flow is characterized by the so-called Brakke's inequality which dictates
the rate of change of surface measure. We may, for example, characterize a 
normal vector field $v$ to be the velocity of $M_t$ in the sense of Brakke if 
\begin{equation}\label{nor1}
\frac{d}{dt}\Big(\int_{M_t}\phi\,d\mathcal H^k\Big)\leq \int_{M_t}\Big\{(\nabla\phi-\phi\,h)\cdot v+\partial_t\phi\Big\}\,d\mathcal H^k
\ \ \mbox{for all non-negative }\phi=\phi(x,t).\end{equation}
Here, the symbol $\mathcal H^k$ is the $k$-dimensional Hausdorff measure and \eqref{nor1} is understood in the sense of distributions. 
If $\{M_t\}_{t\geq 0}$ is
any smooth family of surfaces (which need not be MCF), one can prove that 
the inequality \eqref{nor1} is satisfied if and only if the normal vector field $v$ is the usual velocity of $M_t$ (see \cite[Chapter 2]{Ton-b}). 
Omitting all the details on what $\{M_t\}_{t\geq 0}$ should additionally satisfy, if we can take $v=h$ in \eqref{nor1}, 
then the family $\{M_t\}_{t\geq 0}$ may be roughly called a MCF in the sense of Brakke (or Brakke flow). 

A special feature of the Brakke flow compared to other formulations is Brakke's local regularity theorem  \cite{Br78,Kasai-Tone,Ton-2}, namely,
if a Brakke flow is locally close to a time-independent $k$-dimensional plane in a weak sense of measure, 
then it is a $C^\infty$ MCF
in the space-time interior. The a priori regularity of the Brakke flow is just rectifiability for almost all
time with square integrable generalized mean curvature, so it is highly nontrivial to prove such a high degree of 
regularity. The regularity theorem has two parts, the $C^{1,\alpha}$ part \cite{Kasai-Tone}
which shows the flow is represented as a $k$-dimensional graph of $C^{1,\alpha}$ function among other things, and the
$C^{2,\alpha}$ part \cite{Ton-2} which shows the flow satisfies $v=h$ pointwise. At this point, the standard 
regularity theory for parabolic PDE can be applied to obtain $C^{\infty}$ regularity. Just as usual for parabolic problems, 
the regularity with respect to the time variable is halved, that is, when we talk about $C^{2,\alpha}$, it means
that the time derivative of the graph is $\alpha/2$-H\"{o}lder continuous in the $t$ direction, so at least, 
the classical pointwise notions of velocity and mean curvature are well-defined once the $C^{2,\alpha}$ regularity is available. 
The regularity theorem of
\cite{Kasai-Tone,Ton-2} is actually more general than Brakke's original version in that the motion law can be
$v=h+u^{\perp}$ with a given vector field $u$ in a suitable regularity class. Here, $u^\perp$ is the projection of
$u$ to the orthogonal complement of the tangent space of $M_t$. Just to familiarize the reader, note that the motion law is 
a geometric analogue of the heat equation with inhomogeneous term: ``$\partial_t f=\Delta f+u$''. To relate to the time-independent case, 
the result \cite{Kasai-Tone} is the precise parabolic extension of the Allard
regularity theorem \cite{Allard} and gives a $C^{1,\alpha}$ regularity under the dimensionally sharp 
integrability assumption on $u$. If $u$ is assumed to be H\"{o}lder continuous, then \cite{Ton-2} proves 
the $C^{2,\alpha}$ regularity with the conclusion that $v=h+u^{\perp}$ holds classically in the space-time interior.
 
One question not addressed in \cite{Kasai-Tone} and the main subject of the present paper may be explained as follows. 
In the situation considered in \cite{Kasai-Tone} where $u$
satisfies
\begin{equation}\label{power}
\|u\|_{L^{p,q}}:=\Big(\int_0^T\Big(\int_{M_t}|u(x,t)|^p\,d\mathcal H^k(x)\Big)^{q/p}\,dt\Big)^{1/q}<\infty
\end{equation} 
with $p,q\in [2,\infty)$ satisfying
\begin{equation}\label{power2}
\alpha:=1-\frac{k}{p}-\frac{2}{q}>0,
\end{equation}
if $M_t$ is locally close to a $k$-plane in measure and $v=h+u^\perp$ is satisfied in the sense of Brakke as in
\eqref{nor1}, then \cite{Kasai-Tone} proves that 
$M_t$ is represented as a $C^{1,\alpha}$ graph with a suitable estimate (see \cite[Theorem 8.7]{Kasai-Tone}). 
It is then reasonable to ask if the
graph belongs additionally to $L^q((0,T);W^{2,p})\cap W^{1,q}((0,T);L^p)$, a natural class in view of the 
parabolic PDE regularity theory. Furthermore, one may ask
if the motion law of $v=h+u^{\perp}$ is satisfied 
almost everywhere pointwise. In other words, the question is: is the $C^{1,\alpha}$ graph of $M_t$ a strong PDE solution 
for $v=h+u^\perp$? Though the affirmative conclusion may sound reasonable, 
the answer is not known in general at present. A quick remark on the reason is 
that the formulation \eqref{nor1} is based on the inequality and fails to give a PDE with equality to work with even when
$M_t$ is a $C^{1,\alpha}$ graph. 
The comparison with the time-independent situation of the Allard regularity theorem is interesting
in that, once $C^{1,\alpha}$ regularity is established for the weak formulation of $0=h+u^\perp$ (see \cite[10.2]{Kasai-Tone}), where \eqref{power}
and \eqref{power2} in the time-independent situation
require $p>k$ (with $q$ arbitrary) and $h=-u^\perp\in L^p$, then the standard elliptic PDE regularity theory 
shows that the graph is in $W^{2,p}$ and a strong solution for $h=-u^\perp$. Thus, showing 
$C^{1,\alpha}$ leads automatically to the strong solution in the time-independent case of Allard regularity theory. As stated already, if $u$ is in $C^{\alpha}$, then \cite{Ton-2} shows that the graph is a 
classical $C^{2,\alpha}$ solution satisfying the PDE, thus there is a certain gap between the H\"{o}lder case and $L^{p,q}$ case presently. 

Towards this question in this paper, for the hypersurface case $k=n-1$, we prove that $M_t$ is a strong PDE solution of $v=h+u^\perp$
if we additionally assume a certain regularity with respect to $t$:
\begin{theorem}\label{main-brief}
Suppose that $M_t$ is represented as a $C^{1,\alpha}$ graph $x_n=f(x_1,\ldots,x_{n-1},t)$ locally in space-time, and
\eqref{nor1} is satisfied in the sense of distributions with $v=h+u^\perp$ for $u$ satisfying \eqref{power} and \eqref{power2}. 
If the time derivative $\partial_t f$ exists as a signed Radon measure, 
then $\partial_t f$ and $\nabla^2 f$ belong to $L^{p,q}$ 
and the function $f$ satisfies $v=h+u^\perp$ pointwise for almost all point, that is, $f$ is a 
strong solution of the PDE, $v=h+u^\perp$. 
\end{theorem}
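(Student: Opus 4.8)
The plan is to reduce the statement to a single scalar equation, established first in the distributional sense, and then to bootstrap the integrability. Working locally on $B\times(0,T)$ with $B\subset\R^{n-1}$ a ball, set $W:=\sqrt{1+|\nabla f|^2}$, $g:=\nabla f/W$, $H:=\mathrm{div}\,g$, and $\nu:=(-\nabla f,1)/W$ for the upward unit normal; on the graph one has $\d\mathcal H^{n-1}=W\,\d x'$, $h=H\nu$, and, because $k=n-1$, the whole law $v=h+u^\perp$ is encoded in the single identity $\partial_t f=W\,\mathrm{div}(\nabla f/W)+W(u\cdot\nu)$, the scalar Brakke velocity being $H+u\cdot\nu$ and the geometric normal velocity being $(\partial_t f/W)\nu$. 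Since \eqref{nor1} with $v=h+u^\perp$ contains the term $\phi\,h\cdot v=\phi(|h|^2+h\cdot u^\perp)$, the hypotheses already force $h\in L^2_{\mathrm{loc}}$, so $WH$ and $W(u\cdot\nu)$ lie in $L^1_{\mathrm{loc}}$; once the displayed identity is proved as distributions, the signed Radon measure $\partial_t f$ is therefore absolutely continuous, $\partial_t f\in L^1_{\mathrm{loc}}$, and the PDE holds a.e. As preparation I would also record the interior spatial regularity: for a.e.\ $t$ the function $f(\cdot,t)$ solves $a_{ij}(\nabla f)\partial_{ij}f=WH\in L^2_{\mathrm{loc}}$ with $a_{ij}(p)=\delta_{ij}/\sqrt{1+|p|^2}-p_ip_j/(1+|p|^2)^{3/2}$ uniformly elliptic and $a_{ij}(\nabla f(\cdot,t))\in C^{0,\alpha}$ with modulus controlled uniformly in $t$ (as $f$ is $C^{1,\alpha}$ in space-time); Calder\'on--Zygmund estimates then give $f(\cdot,t)\in W^{2,2}_{\mathrm{loc}}$ with a $t$-uniform bound, hence $\nabla^2f,H,\nabla W\in L^2_{\mathrm{loc}}$ in space-time.

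The core step is to extract the equation from Brakke's inequality. Testing \eqref{nor1} with $\phi(x,t)=\zeta(x')\beta(x_n)$, where $\zeta,\beta\ge0$ are smooth and compactly supported (so $\beta$ may be prescribed arbitrarily on the bounded range of $f$), a direct computation gives, distributionally in $t$,
\[
\frac{\d}{\d t}\int\zeta\,\beta(f)\,W\,\d x'\le\int\Big[(H+u\cdot\nu)\big(\zeta\,\beta'(f)-\beta(f)\,\nabla\zeta\cdot\nabla f\big)-\zeta\,\beta(f)\,H(H+u\cdot\nu)\,W\Big]\d x'.
\]
I would show this is an equality by mollifying $f$ in the time variable alone, $f_\varepsilon:=f*\rho_\varepsilon$: the Radon-measure hypothesis makes $\partial_t f_\varepsilon=(\partial_t f)*\rho_\varepsilon$ a locally bounded function with $\partial_t f_\varepsilon\weak\partial_t f$, while $\nabla f_\varepsilon\to\nabla f$ locally uniformly and $\nabla^2 f_\varepsilon\to\nabla^2 f$ in $L^2_{\mathrm{loc}}$. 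For $f_\varepsilon$ the left-hand side is genuinely differentiable, and an integration by parts in $x'$ yields the \emph{exact} identity
\[
\frac{\d}{\d t}\int\zeta\,\beta(f_\varepsilon)\,W_\varepsilon\,\d x'=\int\Big[\zeta\,\beta'(f_\varepsilon)\,W_\varepsilon^{-1}-\beta(f_\varepsilon)\,\nabla\zeta\cdot g_\varepsilon-\zeta\,\beta(f_\varepsilon)\,H_\varepsilon\Big]\partial_t f_\varepsilon\,\d x'.
\]

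Passing to the limit $\varepsilon\to0$, comparing with the Brakke inequality, and varying $\beta$ over both increasing and decreasing profiles on the range of $f$ (which flips the sign of the $\partial_t f$-pairing) should sandwich the deficit to zero; after the algebraic cancellations one is left, for all admissible $\zeta,\beta$, with $\int[\zeta\,\beta'(f)W^{-1}-\beta(f)\,\nabla\zeta\cdot g-\zeta\,\beta(f)\,H]\,(\partial_t f-W(H+u\cdot\nu))\,\d x'=0$. Letting $\beta$ run through $x_n,x_n^2,\dots$ (localised to be nonnegative) and $\zeta$ through all nonnegative test functions, and using the $\beta\equiv1$ relations (which supply every $\mathrm{div}(\zeta g)$), the span of these multipliers contains $\{\zeta W:\zeta\in C_c^\infty(B)\}$, which is dense in $C_c(B)$; hence $\partial_t f-W(H+u\cdot\nu)=0$ as measures, i.e.\ the PDE holds a.e., and in particular $\partial_t f\in L^2_{\mathrm{loc}}$. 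Then $f$ is a strong solution of the uniformly parabolic equation $\partial_t f=a_{ij}(\nabla f)\partial_{ij}f+W(u\cdot\nu)$ with $C^{0,\alpha}$ coefficients and right-hand side in $L^{p,q}_{\mathrm{loc}}$; since $\partial_t f,\nabla^2 f\in L^2_{\mathrm{loc}}$ are already known, the anisotropic (mixed-norm) parabolic $L^{p,q}$ Calder\'on--Zygmund estimate upgrades this to $\partial_t f,\nabla^2 f\in L^{p,q}_{\mathrm{loc}}$, yielding the full conclusion.

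The step I expect to be the main obstacle is the passage $\varepsilon\to0$ above. Every term there converges by soft arguments except $\int\zeta\,\beta(f_\varepsilon)\,H_\varepsilon\,\partial_t f_\varepsilon\,\d x'$, which pairs an $L^2_{\mathrm{loc}}$-convergent factor with the only weakly-$*$ convergent object present, the measures $\partial_t f_\varepsilon$: a bare weak limit does not control it, and this is precisely the point at which the a priori possibility of a singular part of $\partial_t f$ must be excluded. One expects to have to exploit the dissipation term $-\int\zeta\beta(f)H^2W$ in \eqref{nor1} together with a lower-semicontinuity/compactness argument, rather than a naive passage to the limit; this is where the hypothesis that $\partial_t f$ be a signed Radon measure is genuinely indispensable, and it is the crux of the whole argument. (Steps based only on elliptic, respectively parabolic, $L^p$-regularity — the first and last stages above — are routine.)
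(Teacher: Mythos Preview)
You have correctly located the crux, but the proposal stops precisely where the proof becomes hard and does not supply a way through. The obstruction you name --- passing to the limit in $\int\zeta\,\beta(f_\varepsilon)\,H_\varepsilon\,\partial_t f_\varepsilon\,dx'$ --- is genuine and is not resolved by an appeal to ``the dissipation term together with a lower-semicontinuity/compactness argument'': the dissipation $-\int\phi|h|^2$ sits in Brakke's inequality for $f$, not in the approximate identity for $f_\varepsilon$, and there is no evident functional whose lower semicontinuity controls this cross term. There is a second difficulty you gloss over. The multiplier in your limiting relation can be rewritten as $m=\zeta\,\beta'(f)\,W-\mathrm{div}\big(\beta(f)\,\zeta\, g\big)$, whose second summand lies only in $L^2$ because it contains $H$; hence the pairing $\int m\,d\mu$ with a putative signed measure $\mu=\partial_t f-W(H+u\cdot\nu)$ is not even defined until you already know $\mu\ll\mathcal L^n$, which is exactly what you are trying to prove. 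The ``sandwiching'' by flipping the sign of $\beta'$ does not flip the sign of $m$ either, since the $\beta$-part persists, so the density step cannot be run as stated.

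The paper's argument avoids the cross term altogether by choosing a test function that vanishes on $M_t$. First, by Riesz representation, Brakke's inequality becomes an \emph{equality}
\[
\int_0^1\!\!\int_{M_t}\big\{(\nabla\phi-\phi\,h)\cdot(h+u^\perp)+\partial_t\phi\big\}\,d\mathcal H^{n-1}dt=\int\phi\,d\xi
\]
for a nonnegative Radon measure $\xi$ supported on $\bigcup_t(M_t\times\{t\})$, now valid for \emph{signed} $\phi$. One then tests with $\phi\,d^\varepsilon$, where $d^\varepsilon$ is the (truncated) signed distance to the space-time mollified graph $M_t^\varepsilon$. Since $|d^\varepsilon|\le C\varepsilon^{1+\alpha}$ on $M_t$, both the $\xi$-term and the dangerous $\phi\,d^\varepsilon\,h\cdot(\cdots)$ term vanish as $\varepsilon\to 0$; only $\nabla(\phi d^\varepsilon)\to\phi\,\nu$ and $\phi\,\partial_t d^\varepsilon$ survive. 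A computation of $\partial_t d^\varepsilon$ via the nearest-point projection onto $M_t^\varepsilon$, together with the Radon-measure hypothesis on $\partial_t f$, shows $\int_0^1\!\int_{M_t}\phi\,\partial_t d^\varepsilon\to\int\partial_t\phi\,f$, whence $\partial_t f=(h+u)\cdot\nu\,\sqrt{1+|\nabla f|^2}$ weakly and hence $\partial_t f\in L^2_{\mathrm{loc}}$. From that point your final $L^{p,q}$ upgrade via parabolic Calder\'on--Zygmund is correct. The idea missing from your plan is this choice of a test function vanishing on $M_t$, which is what eliminates the product of $h$ with the time derivative that blocks your limit.
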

The precise statement with detailed descriptions on $M_t$ will be given later, but the reader may think that
$\{M_t\}_{t\geq 0}$ is a ``Brakke-like flow'' satisfying \eqref{nor1} with $v=h+u^\perp$.
The result shows that Brakke's formulation of the velocity $v=h+u^\perp$ gives a strong solution 
if it is supplemented by the additional assumption on the time derivative. 
Though this additional assumption itself does not appear to follow from Brakke's formulation of \eqref{nor1},
it is interesting to note that the flow obtained as a limit of the Allen-Cahn equation with
transport term 
in \cite{TaTo16} satisfies $\partial_t f\in L^2$ in addition to \eqref{nor1}, so in particular a signed Radon measure. 
Roughly speaking, $\partial_t f\in L^2$
follows from the property that the distributional derivative of the limit phase function with respect to $t$ 
is $L^2$ with respect to the surface measure of interface. The latter property is strongly related to the MCF in the sense of
BV solutions \cite{Luckhaus}, where the existence of $L^2$ velocity is a part of the definition. 
Combined with \cite{TaTo16}, we prove a local-in-time existence of the strong solution for the
surface evolution problem $v=h+u^\perp$
where $u$ is a given vector field in a Sobolev space whose trace on $\{M_t\}_{t\geq 0}$
is $L^{p,q}$, see Theorem \ref{th:existence}.  

For Theorem \ref{main-brief},
we first give a proof under a stronger assumption of 
$\partial_t f\in L^2$. There are three reasons for doing this: (1) The proof is 
simpler compared to the case of $\partial_t f$ being a signed Radon measure. (2) The proof should work in principle for general $k$-dimensional case. 
(3) The application to the Allen-Cahn equation with
transport term falls in this situation and it is good to have a simpler 
proof in this case separately. If $\partial_t f\in L^2$,
the graph can be approximated by a smooth function and some appropriate
scaling argument shows the desired PDE. For the case of 
signed Radon measure, we use the fact that Brakke's inequality
gives rise to a Radon measure with which 
Brakke's inequality is turned into the \emph{equality}. 
Then using the signed distance function to the suitably mollified smooth 
graph and estimating the errors coming from the approximation, 
we show that the graph has a weak $L^2$ $t$-derivative. We remark
that, because of
the use of signed distance function, the proof seems to be limited to the 
hypersurface case. One natural question is that whether the 
additional assumption on $\partial_t f$ is necessary or not and
presently we do not know the answer. 
 
The organization of the paper is as follows. In Section \ref{results}, 
detailed assumptions and main results are presented. In Section \ref{ver1},
the proof of the main regularity theorem under the stronger assumption of $\partial_t f\in
L^2$ is given, and that of general case is given in Section \ref{ver2}.
In the final Section \ref{PDE}, the proof of 
Theorem \ref{th:existence} is given. 

\section{main results}\label{results}
\subsection{Basic notation}
For $0<r<\infty$ and $a\in\mathbb R^k$ ($1\leq k\leq n$), we define $B_r^k(a):=\{x\in\mathbb R^{k}\,:\,|x-a|<r\}$ and $B_r^k:=B_r^k(0)$.
We often use $B_r^{n-1}$ throughout the paper, so we write $B_r$ for $B_r^{n-1}$. The symbols $\mathcal L^k$ 
defined in $\mathbb R^k$ and
$\mathcal H^k$ defined in $\mathbb R^n$ are the
$k$-dimensional Lebesgue measure and the Hausdorff measure, respectively. Notation for the functional spaces
such as $L^p(B_r)$ and $W^{k,p}(B_r)$ are the same as in \cite{GiTr83}. 

\subsection{Setting of the problem} 
Suppose that a function $f\,:\,(x,t)\in B_1\times(0,1)\rightarrow\mathbb R$ is a $C^{1,\alpha}$ function in the parabolic sense,
where 
$\alpha\in(0,1)$ is defined as in \eqref{power2} with given $p,q\in[2,\infty)$. Here, parabolic $C^{1,\alpha}$ means that
\begin{equation}\label{lip}
\begin{split}
[f]_{C^{1,\alpha}}:=&\sup_{(y_j,s_j)\in B_1\times(0,1),\,j=1,2}\frac{|\nabla f(y_1,s_1)-\nabla f(y_2,s_2)|}{\max\{|y_1-y_2|^\alpha,
|s_1-s_2|^{\alpha/2}\}}\\ &+\sup_{(y,s_j)\in B_1\times(0,1),\,j=1,2}\frac{|f(y,s_1)-f(y,s_2)|}{|s_1-s_2|^{(1+\alpha)/2}}<\infty.
\end{split}
\end{equation}
The symbol $\nabla f$ is the gradient of $f$ with respect to the space variables. 
With this $f$, define the $(n-1)$-dimensional hypersurface $M_t$ by 
\begin{equation}\label{mdef}
M_t:=\{(x,f(x,t))\,:\, x\in B_1\}\subset \mathbb R^n
\end{equation}
for $t\in(0,1)$. We assume that $M_t$ has the generalized mean curvature vector (see \cite{Allard,Simon} for
the definition) $h=h(\cdot,t)$ for $\mathcal L^1$-a.e.~$t\in(0,1)$. In the case that $M_t$ is represented as a graph, we may consider either that $h$ is defined 
on $M_t$ or $B_1$, and we may use the same notation $h$ with no fear of confusion. From the definition of the
generalized mean curvature vector and since $h=(h\cdot\nu)\nu$ by the perpendicularity theorem of Brakke \cite[Chapter 5]{Br78}, $h$
satisfies
\begin{equation}\label{hdef1}
\int_{B_1}\nabla\psi\cdot\Big(\frac{\nabla f}{\sqrt{1+|\nabla f|^2}}\Big)\,dx=-\int_{B_1}\psi\,h\cdot\nu\,dx
\end{equation}
for all $\psi\in C^1_c(B_1)$.  Here, $\nu:=(-\nabla f,1)/\sqrt{1+|\nabla f|^2}$ is the unit normal vector of $M_t$. 
We assume that $h$ is in $L^2$, namely,
\begin{equation}\label{hdef2}
\int_0^{1}\int_{M_t}|h|^2\,d\mathcal H^{n-1}dt=\int_0^{1}\int_{B_1}|h|^2 \sqrt{1+|\nabla f|^2}\,dxdt<\infty.
\end{equation}
Next, suppose that a vector field $u(\cdot,t)\,:\,M_t\rightarrow \mathbb R^n$ defined for $\mathcal L^1$-a.e.~$t\in(0,1)$
satisfies (just like $h$, we may use the same notation $u(\cdot,t)$ as a function defined on $M_t$ or $B_1$)
\begin{equation}\label{hdef3}
\int_0^{1}\Big(\int_{M_t}|u|^p\,d\mathcal H^{n-1}\Big)^{\frac{q}{p}}\,dt=\int_0^{1}\Big(\int_{B_1}|u|^p
\sqrt{1+|\nabla f|^2}\,dx\Big)^{\frac{q}{p}}\,dt<\infty.
\end{equation}
For all 
non-negative test function with compact support 
$\phi\in C_c^1((B_1\times\mathbb R)\times(0,1);\mathbb R^+)$, assume that we have the inequality
\begin{equation}\label{Bineq}
0\leq \int_0^{1}\int_{M_t}(\nabla\phi-\phi\, h)\cdot\{h+(u\cdot\nu)\nu\}+\partial_t\phi\,d\mathcal H^{n-1}dt.
\end{equation}
Here note that $\nabla\phi$ is the gradient of $\phi$ in $\mathbb R^n$ and the formula \eqref{Bineq} corresponds
to Brakke's formulation for ``$v=h+u^\perp$'' as in \eqref{nor1}. 
\begin{remark}
The above assumptions are locally satisfied after a suitable change of variables 
once the regularity result in \cite{Kasai-Tone} is applied
under the assumptions \cite[(A1)-(A4)]{Kasai-Tone}, see the 
statement of 
\cite[Theorem 8.7]{Kasai-Tone}. 
\end{remark}
\label{setting}
\subsection{Statement of main result}
The following is the main theorem of the present paper. 
\begin{theorem}\label{main1}
Suppose that $f$, $h$ and $u$ are as discussed in Section \ref{setting} and assume additionally
that $\partial_t f$ is a signed Radon measure on $B_1\times (0,1)$. 
Then, we have 
\begin{equation}\label{m2}
f\in L^q((s^2,1);W^{1,p}(B_{1-s}))\cap W^{1,q}((s^2,1);L^p(B_{1-s}))
\end{equation}
for all $s\in(0,1)$
and $f$ satisfies the motion law of $v=h+u^\perp$, that is, 
\begin{equation}\label{m1}
\frac{\partial_t f}{\sqrt{1+|\nabla f|^2}}={\rm div}\Big(\frac{\nabla f}{\sqrt{1+|\nabla f|^2}}\Big)
+u\cdot\frac{(-\nabla f,1)}{\sqrt{1+|\nabla f|^2}}
\end{equation}
$\mathcal L^n$-a.e.~on $B_1\times(0,1)$. 
\end{theorem}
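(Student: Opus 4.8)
\emph{Overall strategy.} The plan is to proceed in two steps: first prove the conclusion under the stronger hypothesis $\partial_t f\in L^2(B_1\times(0,1))$, and then reduce the general case to it by showing that the signed Radon measure $\partial_t f$ is induced by an $L^2$ function. The first step rests on smoothing $f$ and passing to the limit in \eqref{Bineq}; the reduction rests on the observation that \eqref{Bineq}, holding for all \emph{non-negative} $\phi$, is equivalent to an \emph{equality} against a non-negative Radon measure, which can then be tested against sign-changing functions built from the signed distance to a mollified graph.

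\emph{Step 1: the case $\partial_t f\in L^2$.} Let $f_\eps$ denote the mollification of $f$ at parabolic scale $\eps$, so $f_\eps$ is smooth, $f_\eps\to f$ in parabolic $C^1$ on compact subsets with $\|f-f_\eps\|_\infty=O(\eps^{1+\alpha})$ and $\|\nabla f-\nabla f_\eps\|_\infty=O(\eps^\alpha)$, and $\partial_t f_\eps\to\partial_t f$ in $L^2_{loc}$. For the smooth hypersurfaces $M_t^\eps:=\{(x,f_\eps(x,t))\}$ the first variation identity holds with no defect, so $\frac{d}{dt}\int_{M_t^\eps}\phi\,d\Ha^{n-1}$ is computable exactly with normal velocity $\frac{\partial_t f_\eps}{\sqrt{1+|\nabla f_\eps|^2}}\nu_\eps$. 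Testing \eqref{Bineq} for $f$ against a function adapted to $M_t^\eps$, comparing with this exact identity for $f_\eps$, and using the $C^1$ convergence together with \eqref{hdef2}--\eqref{hdef3} and the $L^2$ bound on $\partial_t f$, one is led in the limit $\eps\to0$ (via a scaling argument at a.e.\ point) to
\begin{equation*}
\frac{\partial_t f}{\sqrt{1+|\nabla f|^2}}=h\cdot\nu+u\cdot\nu\qquad\Leb^n\text{-a.e. on }B_1\times(0,1),
\end{equation*}
which is \eqref{m1} once $h\cdot\nu$ is rewritten as ${\rm div}\bigl(\nabla f/\sqrt{1+|\nabla f|^2}\bigr)$ via \eqref{hdef1}. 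Finally, \eqref{m1} says that $f$ is a weak solution of a uniformly parabolic linear equation $\partial_t f=\tilde a_{ij}(x,t)\,\partial_{ij}f+\tilde g$ with parabolically $C^\alpha$ coefficients (the linearization of the minimal-surface operator along $\nabla f$, which is bounded and parabolically $C^\alpha$) and $\tilde g=u\cdot(-\nabla f,1)\in L^{p,q}_{loc}$ by \eqref{hdef3}; since $\partial_t f\in L^2$ already places $f$ in $L^2((0,1);W^{2,2}_{loc})\cap W^{1,2}((0,1);L^2_{loc})$ by slice-wise elliptic estimates, interior parabolic estimates in mixed-norm Lebesgue spaces upgrade this to $\partial_t f,\nabla^2 f\in L^{p,q}(B_{1-s}\times(s^2,1))$ for all $s\in(0,1)$, giving \eqref{m2}.

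\emph{Step 2: reduction of the general case.} It remains to prove $\partial_t f\in L^2$. Since \eqref{Bineq} holds for every $\phi\ge0$, the linear functional $T(\phi):=\int_0^1\!\int_{M_t}\{(\nabla\phi-\phi h)\cdot(h+(u\cdot\nu)\nu)+\partial_t\phi\}\,d\Ha^{n-1}dt$ is a non-negative distribution on $(B_1\times\R)\times(0,1)$, hence $T(\phi)=\int\phi\,d\tilde\mu$ for a non-negative Radon measure $\tilde\mu$; and $T(\phi)=0$ whenever $\supp\phi$ misses the space--time track $\mathcal M:=\{(x,f(x,t),t)\}$, so $\tilde\mu$ is carried by $\mathcal M$ and the equality $T(\phi)=\int\phi\,d\tilde\mu$ holds for \emph{all} $\phi$, not only non-negative ones. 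Let $f_\eps$ be as above and let $d_\eps(y,t)$ be the signed distance from $y\in\R^n$ to $M_t^\eps$. Because $\alpha>0$, $\|f-f_\eps\|_\infty=O(\eps^{1+\alpha})$ is much smaller than the reach $\gtrsim\eps^{1-\alpha}$ of $M_t^\eps$, so for small $\eps$ the function $d_\eps$ is smooth near $M_t$, where $|\nabla d_\eps|=1$, and on $M_t$: $d_\eps=O(\eps^{1+\alpha})$, $\nabla d_\eps=\nu+O(\eps^\alpha)$, and $\partial_t d_\eps=-\bigl((0,\partial_t f_\eps)\circ\Pi_\eps\bigr)\cdot\nabla d_\eps$ with $\Pi_\eps$ the nearest-point projection onto $M_t^\eps$. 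I then take, as test function in $T(\phi)=\int\phi\,d\tilde\mu$,
\begin{equation*}
\phi(x,x_n,t)=\psi(x,t)\,g\bigl(d_\eps(x,x_n,t)\bigr),\qquad \psi\in C^1_c(B_1\times(0,1)),\quad g\in C^1_c(\R),\ g(0)=0,\ g'(0)=1,
\end{equation*}
which is automatically compactly supported in $(B_1\times\R)\times(0,1)$ and is sign-changing --- this is exactly why the equality, rather than \eqref{Bineq} itself, is needed. One estimates: the left side is $O(\eps^{1+\alpha})$ because $|g(d_\eps)|\le C\eps^{1+\alpha}$ on $\supp\tilde\mu\subset\mathcal M$; the curvature and forcing terms are controlled by \eqref{hdef2}--\eqref{hdef3} together with $\nabla d_\eps=\nu+O(\eps^\alpha)$, $d_\eps=O(\eps^{1+\alpha})$; and the velocity term, after changing variables $M_t\leftrightarrow M_t^\eps$ and using $\|\partial_t f_\eps\|_{L^1}\le\|\partial_t f\|_{\mathrm{TV}}$, converges (via $\partial_t f_\eps\weak\partial_t f$ weakly-$*$ as measures) to $-\int_0^1\!\int_{B_1}\psi\,d(\partial_t f)$. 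Passing to the limit therefore gives
\begin{equation*}
\int_0^1\!\!\int_{B_1}\psi\,d(\partial_t f)=\int_0^1\!\!\int_{B_1}\psi\,(h\cdot\nu+u\cdot\nu)\sqrt{1+|\nabla f|^2}\,dx\,dt
\end{equation*}
for every $\psi$, so $\partial_t f=(h\cdot\nu+u\cdot\nu)\sqrt{1+|\nabla f|^2}$, which lies in $L^2(B_1\times(0,1))$ by \eqref{hdef2}--\eqref{hdef3}. In particular $\partial_t f\in L^2$ and Step~1 applies.

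\emph{Main obstacle.} The crux is Step~2: verifying that $M_t$ remains inside the smoothness region of $d_\eps$ (this is where $\alpha>0$ is used) and, above all, estimating every error term arising from replacing the $C^{1,\alpha}$ graph $M_t$ and its merely-measure velocity by the smooth data $f_\eps$, so that they vanish as $\eps\to0$ even though $\|\nabla^2 f_\eps\|_\infty$ and $\|\partial_t f_\eps\|_\infty$ blow up --- the saving feature being that only $\|\partial_t f_\eps\|_{L^1}$ and the locally finite total mass of $\tilde\mu$ enter the bounds. Since the construction uses the signed distance function, this part appears genuinely limited to the hypersurface case $k=n-1$.
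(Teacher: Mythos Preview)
Your proposal is correct and follows essentially the same two-step architecture as the paper: first the $L^2$ case via the exact velocity identity for smooth approximations combined with a blow-up/scaling argument at Lebesgue points and then parabolic $L^{p,q}$ regularity; second, the reduction of the signed-measure case by turning Brakke's inequality into an equality against a Radon measure $\tilde\mu$ supported on the track, testing with $\psi$ times the (truncated) signed distance to the mollified graph $M_t^\eps$, and passing to the limit using the reach bound $\gtrsim\eps^{1-\alpha}$ and the uniform $L^1$ control on $\partial_t f_\eps$. The only cosmetic difference is that in Step~1 the paper first isolates the velocity identity $\int_0^1\!\int_{M_t}\{(\nabla\psi-\psi h)\cdot v+\partial_t\psi\}\,d\Ha^{n-1}dt=0$ as a separate lemma (proved by the smoothing you describe) and then subtracts it from \eqref{Bineq} before the parabolic blow-up, whereas your description merges these; and in Step~2 the paper makes the change of variables $M_t\leftrightarrow M_t^\eps$ explicit via the nearest-point map $F^\eps$ and its inverse, which is exactly the content of your ``changing variables'' line.
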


We give one application of Theorem \ref{main1} to an existence theorem 
of surface evolution problem studied in \cite{TaTo16}. The assumptions on $u$ 
are the same as 
\cite[Theorem 2.2]{TaTo16} but to avoid confusion, $p,q$ in \cite{TaTo16} are denoted 
by $\beta,\gamma$. The claim is that, whenever the $C^{1,\alpha}$
regularity theorem of \cite{Kasai-Tone} is applied to the solution established in \cite{TaTo16}
in some space-time neighborhood, then
it is a strong solution in the same neighborhood. Since the paper \cite{TaTo16} shows the short-time existence of solution 
for which the $C^{1,\alpha}$ regularity theorem is applicable for all points, we obtain the following.  
\begin{theorem}\label{th:existence}Suppose $n\ge 2$,
\[2<\gamma<\infty,\ \ \frac{n\gamma}{2(\gamma-1)}<\beta<\infty\ \ \left(\frac{4}{3}\le \beta\ {\rm in\ addition\ if}\ n=2\right)\]
and $\Omega=\R^n$ or $\mathbb{T}^n$. Given any time-dependent Sobolev vector field
\[u\in L^\gamma_{loc}([0,\infty);(W^{1,\beta}(\Omega))^n)\]
and a non-empty bounded domain $\Omega_0\subset\Omega$ with $C^1$ boundary $M_0=\partial\Omega_0$, there exist $T>0$ and a family of $C^{1,\alpha}$ hypersufaces $\{M_t\}_{t\in(0,T)}$ whose motion law is 
$v=h+u^\perp$ as a strong solution and $\lim_{t\rightarrow 0+}M_t=M_0$ in $C^1$ topology.
Here, 
$\alpha=2-n/\beta-2/\gamma$ if $\beta<n$, and if $\beta\geq n$ one may 
take any $\alpha$ with $0<\alpha<1-2/\gamma$. In addition,
the vector field $u$ is defined as a trace on $M_t$ for a.e.~$t\in(0,T)$ 
and $\|u\|_{L^{p,q}}$ (as in \eqref{power})
is finite with $p=\beta(n-1)/(n-\beta)$ and $q=\gamma$ if $\beta<n$. 
\end{theorem}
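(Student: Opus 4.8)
\emph{Proof proposal.} The plan is to assemble three ingredients: the short-time existence of a weak (Brakke-type) solution carrying an $L^2$ time derivative, from \cite{TaTo16}; the parabolic $C^{1,\alpha}$ regularity theorem of \cite{Kasai-Tone}; and Theorem \ref{main1} of the present paper, which upgrades the $C^{1,\alpha}$ graph to a strong solution. First I would invoke \cite[Theorem 2.2]{TaTo16}: for the stated ranges of $\beta,\gamma$ (with the extra lower bound on $\beta$ when $n=2$, which is exactly what is needed below to guarantee $p\ge 2$), for the given $u\in L^\gamma_{loc}([0,\infty);(W^{1,\beta}(\Omega))^n)$ and $M_0=\partial\Omega_0$ with $M_0\in C^1$, there is a family $\{M_t\}_{t\ge 0}$ which is a unit-density Brakke-type flow satisfying \eqref{nor1} with $v=h+u^\perp$, with $M_t\to M_0$ as $t\to 0+$ in the sense of measures, and with the additional property that the distributional time derivative of the limit phase function is $L^2$ with respect to the surface measure of the interface. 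Because $M_0$ is a compact $C^1$ hypersurface, after choosing $T>0$ small the flow stays in a thin tubular neighborhood of $M_0$ and, near every space-time point $(x_0,t_0)$ with $t_0\in(0,T)$ and $x_0\in M_{t_0}$, it is as close as desired to a static hyperplane in the space-time $L^2$ measure sense required by \cite{Kasai-Tone}; this is precisely the regime in which \cite{TaTo16} applies the regularity theorem.

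Second, I would apply \cite[Theorem 8.7]{Kasai-Tone} in such a neighborhood: after a rotation, a translation, and a parabolic rescaling, $M_t$ becomes the graph $x_n=f(x',t)$ over $B_1\times(0,1)$ of a function $f$ that is $C^{1,\alpha}$ in the parabolic sense of \eqref{lip}, with $\alpha$ fixed by the integrability exponents of $u$ along the flow. Here one uses the Sobolev trace theorem: on the $C^1$ hypersurface $M_t$ the trace of a $W^{1,\beta}(\Omega)$ function lies in $L^p(M_t)$ with $p=\beta(n-1)/(n-\beta)$ when $\beta<n$ (and in every $L^p$ when $\beta\ge n$), with a bound depending only on the $C^1$ norm of $M_t$, which is uniformly controlled for $t$ in the relevant range; integrating the $\gamma$-th power in $t$ against $u\in L^\gamma_{loc}(W^{1,\beta})$ gives $\|u\|_{L^{p,q}}<\infty$ along the flow with $q=\gamma$. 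Since $k=n-1$, a short computation gives $k/p=(n-1)(n-\beta)/(\beta(n-1))=n/\beta-1$, so \eqref{power2} reads $\alpha=1-k/p-2/q=2-n/\beta-2/\gamma$, which is positive precisely because $\beta>n\gamma/(2(\gamma-1))$ and, when $\beta<n$, automatically satisfies $\alpha<1$; when $\beta\ge n$ one takes $p$ large and any $\alpha<1-2/\gamma$. Thus $f$, together with the generalized mean curvature $h$ of the $C^{1,\alpha}$ graph (which is in $L^2$) and with $u$, fits exactly the setting of Section \ref{setting}, and \eqref{Bineq} is the localization of \eqref{nor1}.

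Third, I would verify the remaining hypothesis of Theorem \ref{main1}, namely that $\partial_t f$ is a signed Radon measure on the graph neighborhood. In the graph parametrization the normal velocity vector has length $|\partial_t f|/\sqrt{1+|\nabla f|^2}$, so the $L^2$ bound on the time derivative of the limit phase measure — the property recorded after Theorem \ref{main-brief} — together with the boundedness of $\nabla f$ coming from the $C^{1,\alpha}$ estimate, yields $\partial_t f\in L^2_{loc}$, in particular a signed Radon measure. Theorem \ref{main1} then gives that $f$ solves \eqref{m1} pointwise $\mathcal L^n$-a.e.\ in this neighborhood and that $f\in L^q(W^{1,p})\cap W^{1,q}(L^p)$ there. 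Since such a neighborhood exists around every space-time point of the flow with $t\in(0,T)$, the family $\{M_t\}_{t\in(0,T)}$ is a strong $C^{1,\alpha}$ solution of $v=h+u^\perp$ in the stated sense, and the trace property of $u$ with the asserted exponents is the one just used. Finally, $M_t\to M_0$ in the $C^1$ topology follows by combining the convergence in measure from \cite{TaTo16} with the uniform parabolic $C^{1,\alpha}$ estimate of \cite{Kasai-Tone}, valid up to times near $0$: a uniform $C^{1,\alpha}$ bound together with $C^0$ convergence of the graphs upgrades, by interpolation and Arzel\`a--Ascoli, to $C^1$ convergence.

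I expect the only real work to be bookkeeping rather than a new difficulty: checking that the hypotheses (A1)--(A4) of \cite{Kasai-Tone} — closeness to a plane in the space-time $L^2$ sense and local smallness of $\|u\|_{L^{p,q}}$ — hold in a full parabolic neighborhood of each point of the flow including times approaching $0$, and that the trace bound for $u$ is uniform in $t$. Since \cite{TaTo16} already arranges applicability of the regularity theorem, the essential new input is Theorem \ref{main1}, and the rest is matching exponents and invoking it.
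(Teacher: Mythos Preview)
Your proposal is correct and follows the paper's route exactly: the short-time $C^{1,\alpha}$ existence is quoted from \cite{TaTo16} (which already packages the application of \cite{Kasai-Tone}), one establishes $\partial_t f\in L^2$ from the phase-function side, and then Theorem \ref{main1} finishes. The only caveat is that the step you call bookkeeping---getting $\partial_t f\in L^2$---is in fact the main content of Section \ref{PDE}: the $L^2$ velocity of the limit phase is not a stated theorem of \cite{TaTo16} but is extracted here (Lemmas \ref{lm:t-derivative}--\ref{lm:velocity formula}) from the uniform Allen--Cahn kinetic bound $\int\eps(\partial_t\varphi^\eps)^2\le C$, and its identification with $\partial_t f/\sqrt{1+|\nabla f|^2}$ then requires a short test-function argument with $\tilde\psi(x,x_n,t)=\omega(x_n)\psi(x,t)$ using that the limit phase $w$ is the indicator of the subgraph of $f$.
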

To be clear about being a strong solution here, for each $t\in(0,T)$ and $x\in M_t$, 
there exists a space-time neighborhood in which $\cup_{t\in(0,T)}(M_t\times\{t\})$ is represented 
as a graph of a function $f$ with the regularity of \eqref{m2} and satisfying the equation \eqref{m1}
for $\mathcal L^n$-a.e. in the neighborhood. Note that the conditions on $\beta$ and $\gamma$
imply \eqref{power} for $p$ and $q$. If $\beta>n$, then the Sobolev
embedding shows $u\in L_{loc}^\gamma([0,\infty);(C^{1-\frac{n}{\beta}}(\Omega))^n)$ and $\|u\|_{L^{p,q}}<\infty$ for any $p>2$ and $q=\gamma$
and the corresponding regularity result follows for $f$ (the last part is also true for the case of $\beta=n$). 
\section{Proof of Theorem \ref{main1}: $\partial_t f\in L^2$ case}\label{ver1}
First we note that \eqref{hdef1} combined with the standard argument (for example, see
\cite[Section 6.3.1]{Ev10}) that 
\begin{equation}
\begin{split}
&{\rm div}\Big(\frac{\nabla f}{\sqrt{1+|\nabla f|^2}}\Big)=h\cdot\nu \,\,\mbox{ a.e.~in $B_1$ and } \\ &
 \|\nabla^2 f(\cdot,t)\|_{L^2(B_{1-s})}\leq C(\|f(\cdot, t)\|_{L^2(B_1)}+\|h(\cdot,t)\|_{L^2(B_1)})
\end{split}
\end{equation}
for $\mathcal L^1$-a.e.~$t\in(0,1)$ and $s\in (0,1)$ for $C=C(n,\|\nabla f\|_{C^{0}},s)$
and thus, combined with \eqref{hdef2}, we have
\begin{equation}\label{w22}
f\in L^2((0,1);W^{2,2}(B_{1-s})) \ \ \ \mbox{for all }s\in (0,1).
\end{equation}

For the rest of this section, assume the setting explained in Section \ref{setting}. 
\begin{lemma} 
Suppose that $\partial_t f\in L^2(B_1\times(0,1))$. Then for all $\psi\in C_c^1(B_1\times
\mathbb R\times(0,1))$, we have
\begin{equation}\label{a-vel}
\int_0^{1}\int_{M_t}\{(\nabla\psi-
\psi h)\cdot v+\partial_t\psi\}\,d\mathcal H^{n-1}dt=0,
\end{equation}
where $\nabla\psi$ is the gradient of $\psi$ in $\mathbb R^n$ and $v=\frac{\partial_t f}{\sqrt{1+|\nabla f|^2}}\nu$. 
\end{lemma}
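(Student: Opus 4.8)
The plan is to obtain \eqref{a-vel} as the first-variation-in-time identity for the graph: differentiate the weighted area $\int_{B_1}\psi(x,f(x,t),t)\sqrt{1+|\nabla f(x,t)|^2}\,dx$ in $t$ and integrate over $(0,1)$, the boundary terms vanishing because $\psi$ has compact $t$-support. In this graphical, $L^2$-velocity setting this is the concrete version of the classical equivalence recalled after \eqref{nor1} (cf.\ \cite[Chapter 2]{Ton-b}), and the hypotheses $\partial_t f\in L^2$ and \eqref{w22} are exactly what is needed to make the differentiation legitimate; note the computation never uses the sign of $\psi$ (hence not the one-sided inequality \eqref{Bineq}), which is why the outcome is an equality. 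The difficulty is that $\nabla f$, being only parabolically $C^{1,\alpha}$, is not differentiable in $t$, so I would first regularize in space: fix a compact $K\ssubset B_1$ containing the $x$-projection of $\spt\psi$, and for small $\eps>0$ let $f_\eps(x,t):=(f(\cdot,t)*\rho_\eps)(x)$, defined on $K\times(0,1)$. Then $f_\eps$ is $C^\infty$ in $x$, with $\nabla f_\eps$ uniformly bounded (via $[f]_{C^{1,\alpha}}$) and $\partial_t f_\eps=(\partial_t f)*\rho_\eps$, $\nabla^2 f_\eps$, and $\partial_t\nabla f_\eps=\nabla\partial_t f_\eps$ all locally in $L^2$; moreover, for each fixed $x$, $t\mapsto f_\eps(x,t)$ and $t\mapsto\nabla f_\eps(x,t)$ are absolutely continuous (continuous with an $L^1$ weak $t$-derivative).

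Next I would run the computation for $f_\eps$. With $\tilde\psi_\eps(x,t):=\psi(x,f_\eps(x,t),t)$, the function $t\mapsto\tilde\psi_\eps\sqrt{1+|\nabla f_\eps|^2}$ is absolutely continuous with $t$-derivative
\[
\big[(\partial_{x_n}\psi)\,\partial_t f_\eps+\partial_t\psi\big]\sqrt{1+|\nabla f_\eps|^2}+\tilde\psi_\eps\,\frac{\nabla f_\eps\cdot\nabla\partial_t f_\eps}{\sqrt{1+|\nabla f_\eps|^2}}\in L^1(K\times(0,1)).
\]
Integrating over $K\times(0,1)$, the left-hand side is $0$ by compactness of the $t$-support of $\psi$. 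In the last term I integrate by parts in $x$ — legitimate now, since $f_\eps$ is $C^2$ in $x$ and $\tilde\psi_\eps$ has compact $x$-support — using ${\rm div}(\nabla f_\eps/\sqrt{1+|\nabla f_\eps|^2})=h_\eps\cdot\nu_\eps$, where $\nu_\eps:=(-\nabla f_\eps,1)/\sqrt{1+|\nabla f_\eps|^2}$ and $h_\eps\cdot\nu_\eps$ denotes this divergence. Expanding $\nabla\tilde\psi_\eps=\nabla_x\psi+(\partial_{x_n}\psi)\nabla f_\eps$ and using $(\partial_{x_n}\psi)(1+|\nabla f_\eps|^2)-(\partial_{x_n}\psi)|\nabla f_\eps|^2=\partial_{x_n}\psi$ to recombine, I arrive at
\[
0=\int_0^1\!\!\int_{B_1}\big[(\nabla\psi-\tilde\psi_\eps h_\eps)\cdot v_\eps+\partial_t\psi\big]\sqrt{1+|\nabla f_\eps|^2}\,dx\,dt,\qquad v_\eps:=\tfrac{\partial_t f_\eps}{\sqrt{1+|\nabla f_\eps|^2}}\,\nu_\eps,
\]
where $\nabla\psi$ is the $\mathbb R^n$-gradient evaluated at $(x,f_\eps(x,t),t)$ and $h_\eps:=(h_\eps\cdot\nu_\eps)\nu_\eps$.

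Finally I would let $\eps\to0$. Since $\nabla f$ is continuous, $\nabla f_\eps\to\nabla f$ uniformly on $K\times(0,1)$, hence $\sqrt{1+|\nabla f_\eps|^2}\to\sqrt{1+|\nabla f|^2}$ and $\nu_\eps\to\nu$ uniformly, and $f_\eps\to f$ uniformly; $\partial_t f_\eps\to\partial_t f$ strongly in $L^2$, so $v_\eps\to v$ strongly in $L^2$; and $\nabla^2 f_\eps\to\nabla^2 f$ strongly in $L^2$ by \eqref{w22}, so $h_\eps\cdot\nu_\eps={\rm div}(\nabla f_\eps/\sqrt{1+|\nabla f_\eps|^2})\to{\rm div}(\nabla f/\sqrt{1+|\nabla f|^2})=h\cdot\nu$ strongly in $L^2$, whence $h_\eps\to h$ strongly in $L^2$. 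Dominated convergence handles the $\partial_t\psi\sqrt{\cdots}$ and $\nabla\psi\cdot v_\eps\sqrt{\cdots}$ terms, while the curvature--velocity term $\tilde\psi_\eps\,(h_\eps\cdot v_\eps)\sqrt{1+|\nabla f_\eps|^2}=\tilde\psi_\eps\,(h_\eps\cdot\nu_\eps)\,\partial_t f_\eps$ converges in $L^1$ as a product of a bounded factor with two strongly $L^2$-convergent factors; rewriting $\sqrt{1+|\nabla f|^2}\,dx=d\Ha^{n-1}$ on $M_t$ then gives \eqref{a-vel}. I expect this last step — verifying the \emph{strong} $L^2$ convergence of both $h_\eps$ and $v_\eps$ so that their product passes to the limit — to be the only delicate point, and it is precisely where the square-integrability \eqref{hdef2} of $h$ and the hypothesis $\partial_t f\in L^2$, rather than mere Hölder bounds, are used.
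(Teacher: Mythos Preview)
Your proposal is correct and follows essentially the same approach as the paper: establish \eqref{a-vel} for smooth approximations of $f$ via the first-variation-in-time computation and integration by parts, then pass to the limit using the strong $L^2$ convergence of $\partial_t f_\eps$ and $\nabla^2 f_\eps$ together with the uniform convergence of $f_\eps,\nabla f_\eps$. The only minor technical difference is that the paper mollifies $f$ in both space and time (so the approximants are genuinely $C^\infty$ in $(x,t)$) whereas you mollify only in space and rely on absolute continuity in $t$; both routes work for the same reasons.
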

\begin{proof}
In the following calculations, we write the gradient of $\psi$ with respect to
the first $n-1$ variables as $\nabla' \psi$ and the derivative with respect to $x_n$ as $\partial_{x_n}\psi$.
First we assume that $f$ is in $C^{\infty}(B_1\times(0,1))$ and $M_t$ is defined as in \eqref{mdef}. Then, the direct computation shows
\begin{align*}
    \frac{d}{dt} \int_{M_t} \psi\,d\mathcal H^{n-1}&=\frac{d}{dt} \int_{B_1} \psi(x,f(x,t),t) \sqrt{1+\lvert \nabla f \rvert^2} \, dx
    \\&=\int_{B_1} ( \partial_t \psi +\partial _{x_n} \psi \,\partial_t f ) \sqrt{1+\lvert \nabla f \rvert^2}\, dx
    + \int_{B_1} \psi \frac{\nabla f \cdot \nabla \partial_t f}{\sqrt{1+\lvert \nabla f \rvert^2}} \, dx.
\end{align*}
Integrating by part, the second term is rewritten as follows.
{\small
\begin{align*}
    \int_{B_1} \psi \frac{\nabla f \cdot \nabla\partial_t f}{\sqrt{1+\lvert \nabla f \rvert^2}} \, dx =&-\int_{B_1}\partial_t f\,\mathrm{div}\left( \psi \frac{\nabla f}{\sqrt{1+\lvert \nabla f \rvert^2}} \right)\, dx=-\int_{B_1}\psi \,\partial_t f \,\mathrm{div}\left( \frac{\nabla f}{\sqrt{1+\lvert \nabla f \rvert^2}} \right)\, dx \\ &-\int_{B_1}\partial_t f \frac{\nabla' \psi \cdot \nabla f}{\sqrt{1+\lvert \nabla f \rvert^2}} \, dx-\int_{B_1}\partial_t f  \frac{\partial_{x_n}\psi \lvert\nabla f \rvert^2}{\sqrt{1+\lvert \nabla f \rvert^2}} \, dx.
\end{align*}
}
Thus
\begin{align*}
    \frac{d}{dt} \int_{M_t} \psi \, d\mathcal H^{n-1} &
    =\int_{B_1} \partial_t \psi \sqrt{1+\lvert \nabla f \rvert^2}\, dx
    + \int_{B_1}\partial_t f  \frac{\partial_{x_n}\psi }{\sqrt{1+\lvert \nabla f \rvert^2}} \, dx\\&\indent-\int_{B_1}\psi\,\partial_t f \,\mathrm{div}\left( \frac{\nabla f}{\sqrt{1+\lvert \nabla f \rvert^2}} \right)\, dx-\int_{B_1}\partial_t f \frac{\nabla' \psi \cdot \nabla f}{\sqrt{1+\lvert \nabla f \rvert^2}} \, dx\\& =\int_{B_1} \partial_t \psi \sqrt{1+\lvert \nabla f \rvert^2}\, dx
    +\int_{B_1} (\nabla \psi - \psi h)\cdot v \sqrt{1+\lvert \nabla f \rvert^2} \, dx,
\end{align*}
where $h={\rm div}(\frac{\nabla f}{\sqrt{1+|\nabla f|^2}})\nu$, $v=\frac{ \partial_t f}{\sqrt{1+\lvert \nabla f \rvert^2}}\nu$ with $\nu=\frac{1}{\sqrt{1+|\nabla f|^2}}(-\nabla f,1)$. Integrating over $t\in (0,1)$, we obtain \eqref{a-vel}.

For general case, we have $\nabla^2 f,\,\partial_t f \in L^2_{loc}(B_1\times(0,1))$, the
first one by \eqref{w22} and the second one from the assumption. 
Then we can take a sequence $\{f_k\}$ of smooth functions such that, as $k\rightarrow\infty$,
\begin{align*}
   & f_k\rightarrow f,\ \nabla f_k\rightarrow \nabla f\ {\rm in}\ C^{\alpha^{\prime}/2}_{loc}(B_1\times(0,1))\quad({\rm for\ any}\ \alpha'\in(0,\alpha)),\\&\nabla^2f_k\rightarrow \nabla^2 f,\ \partial_t f_k\rightarrow\partial_t f\ {\rm in}\ L^{2}_{loc}(B_1\times(0,1)).
\end{align*}
Since each $f_k$ satisfies \eqref{a-vel}, $f=\lim_{k\rightarrow\infty}f_k$ also satisfies \eqref{a-vel} and the proof is completed. 
\end{proof}

\begin{proof}[Proof of Theorem \ref{main1} with $\partial_t f\in L^2$]
The idea of the proof is similar to \cite[Section 2.1]{Ton-b} for the smooth case.
First we prove that $f$ is a strong solution of $v=h+u^\perp$. By \eqref{Bineq} and \eqref{a-vel}, it holds for any  $\psi\in C^1_c(B_1\times \mathbb R \times (0,1); \R^+)$ that
\begin{equation}\label{a-vel2}
    0\le\int_{0}^{1} \int_{M_t} (\nabla \psi -\psi h)\cdot \{h+(u\cdot \nu)\nu-v\} \, d\mathcal H^{n-1} dt.
\end{equation}
We use the Lebesgue differentiation theorem with respect to the parabolic ball. The following
can be proved by adapting the proof in \cite[Chapter 7]{Rudin} for parabolic balls and $L^2$ norm: 
For $g\in L^2(\mathbb R^{n-1}\times
\mathbb R)$ and $P_r(y,s):=\{(x,t)\in\mathbb R^{n-1}\times \mathbb R\,:\, \max\{|x-y|,|t-s|^{1/2}\}<r\}$, for $\mathcal L^n$-a.e.~$(y,s)
\in\mathbb R^{n-1}\times\mathbb R$, we have
$$\lim_{r\rightarrow 0}\frac{1}{\mathcal L^{n}(P_r(y,s))}\int_{P_r(y,s)}|g(y,s)-g(x,t)|^2\,dxdt=0.$$
We note that the well-known version of Lebesgue differentiation theorem does not 
cover the use of the parabolic balls since they are not ``nicely shrinking sets'' in \cite[7.9]{Rudin},
but the same proof using $P_r(y,s)$ works.
Using this, let $(y,s)$ be a Lebesgue point of the functions $\partial_t f$, $\nabla^2 f$ and $u$,
which is true $\mathcal L^n$-a.e.~on $B_1\times(0,1)$. For any $\tilde\psi\in
C_c^1(\mathbb R^n;\mathbb R^+)$ and $\eta\in C_c^1((-1,1);[0,1])$ with $\int_{\mathbb R}\eta(t)\,dt=1$, define $\psi(x_1,\ldots,x_n,t):=\tilde\psi(x_1,\ldots,x_n)\eta(t)$. Then for all small $\lambda>0$, define
$$\psi_\lambda(x_1,\ldots,x_n,t)=\lambda^{-n} \psi \Big(\frac{x_1-y_1}{\lambda},\ldots,\frac{x_{n-1}-y_{n-1}}{\lambda},\frac{x_n-f(y,s)}{\lambda}, \frac{t-s}{\lambda^2}\Big).$$ 
Here, the reason that we use $\lambda^{-n}$ and not $\lambda^{-n-1}$ is that, in the following,
we want $\iint_{P_\lambda}\psi_\lambda\rightarrow 0$ while $\iint_{P_\lambda}\nabla\psi_\lambda=O(1)$ as $\lambda\rightarrow 0$. 
If ${\rm spt}\,\tilde\psi\subset B^{n}_R$ for $R\geq 1$, we have ${\rm spt}\,\psi_\lambda
\subset B_{\lambda R}^{n}((y,f(y,s)))\times B^1_{\lambda^2}(s)$,
thus for all sufficiently small $\lambda$, $\psi_\lambda\in C_c^1(B_1\times\mathbb R\times(0,1);\mathbb R^+)$.  We estimate as 
\begin{equation}\label{a-vel3}
\begin{split}
\Big|&\int_0^{1}\int_{M_t}  \psi_\lambda\,h\cdot\{h+(u\cdot\nu)\nu-v\}\,d\mathcal H^{n-1}dt\Big| \\
&\leq \lambda^{-n}\sup|\psi|\,\iint_{B_{\lambda R}(y)\times B_{\lambda^2}^1(s) } (|h|^2+|h||u|+|h||v|)\sqrt{1+
|\nabla f|^2}\,dxdt \\
&\leq \frac{2\lambda\omega_{n-1}\sup\,|\psi| R^{n+1}(1+\sup\,|\nabla f|)}{\mathcal L^n(P_{\lambda R}(y,s))}
\iint_{P_{\lambda R}(y,s)}  (|h|^2+|h||u|+|h||v|)\,dxdt,
\end{split}
\end{equation}
where we note $\mathcal L^n(P_{\lambda R}(y,s))=2\omega_{n-1}(\lambda R)^{n+1}$. Since $(y,s)$
is a Lebesgue point, \eqref{a-vel3} shows 
\begin{equation}\label{a-vel4}
\lim_{\lambda\rightarrow 0}\int_0^{1}\int_{M_t}  \psi_\lambda\,h\cdot\{h+(u\cdot\nu)\nu-v\}\,d\mathcal H^{n-1}dt=0.
\end{equation}
Next, we estimate the term involving $\nabla\psi_\lambda$ in \eqref{a-vel2}. To do so, 
we estimate the difference of $\nabla\psi_\lambda$ evaluated on $M_t$ and on the tangent plane of $M_t$
at $(y,s)$. For $(x,t)\in B_{\lambda R}(y)\times B_{\lambda^2}^1(s)$ and by \eqref{lip}, we have
\begin{equation}\label{a-vel5}
|f(x,t)-f(y,s)-\nabla f(y,s)\cdot(x-y)|\leq [f]_{C^{1,\alpha}}(1+R^{1+\alpha})\lambda^{1+\alpha}.
\end{equation}
Writing $\tilde f(x):=f(y,s)+\nabla f(y,s)\cdot(x-y)$, we can use \eqref{a-vel5} to estimate
\begin{equation}\label{a-vel6}
|\nabla\psi_\lambda(x,f(x,t),t)-\nabla\psi_{\lambda}(x,\tilde f(x),t)|\leq \lambda^{-n-1+\alpha}
\|\nabla^2\psi\|_{C^0}[f]_{C^{1,\alpha}}(1+R^{1+\alpha}).
\end{equation}
Write $w:=\{h+(u\cdot\nu)\nu-v\}$ (evaluated on $M_t$) and using \eqref{a-vel6},
\begin{equation}\label{a-vel7}
\begin{split}
\Big|\int_0^{1}&\int_{M_t}\nabla\psi_\lambda\cdot w\,d\mathcal H^{n-1}dt-\iint_{B_{\lambda R}(y)
\times B_{\lambda^2}^1(s)} \nabla\psi_{\lambda}(x,\tilde f(x),t)\cdot w\sqrt{1+|\nabla f|^2}\,dxdt\Big| \\ 
&\leq \lambda^{-n-1+\alpha}c(\alpha,R,\|\psi\|_{C^2},\|f\|_{C^{1,\alpha}})\iint_{B_{\lambda R}(y)
\times B_{\lambda^2}^1(s)} |w|\,dxdt \\
&\leq  \frac{\lambda^\alpha c(\alpha,R,\|\psi\|_{C^2},\|f\|_{C^{1,\alpha}})}{
\mathcal L^n(P_{\lambda R}(y,s))}\iint_{P_{\lambda R}(y,s)} |w|\,dxdt 
\rightarrow 0\,\,\,(\lambda\rightarrow 0),
\end{split}
\end{equation}
where we used the fact that $(y,s)$ is a Lebesgue point of $w$. Let $\tilde x:=(x-y)/\lambda$ and
$\tilde t:=(t-s)/\lambda^2$ and we see that (writing $\tilde w(\tilde x,\tilde t):=w(x,t)$)
\begin{equation}\label{a-vel8}
\begin{split}
&\iint_{B_{\lambda R}(y)
\times B_{\lambda^2}^1(s)} \nabla\psi_\lambda(x,\tilde f(x),t)\cdot w\sqrt{1+|\nabla f|^2}\,dxdt \\
&=\iint_{B_R\times (-1,1)}\eta(\tilde t)\{\nabla\tilde\psi(\tilde x,\nabla f(y,s)\cdot \tilde x)
\cdot\tilde w\}(\sqrt{1+{|\nabla f(y,s)|^2}}+O(\lambda^{\alpha}))\,d\tilde x d\tilde t,
\end{split}
\end{equation}
where we again used \eqref{lip}. Because of the property of the Lebesgue point, the last quantity converges to (note $\int_{-1}^1\eta\,dt=1$)
\begin{equation}\label{a-vel9}
\int_{B_R}\nabla\tilde\psi(\tilde x,\nabla f(y,s)\cdot \tilde x)\cdot w(y,s)\sqrt{1+|\nabla f(y,s)|^2}
\,d\tilde x=\Big(\int_{{\rm Tan}_{Y} M_s}\nabla\tilde \psi\,d\mathcal H^{n-1}\Big)\cdot w(y,s),
\end{equation}
where ${\rm Tan}_Y M_s$ denotes the tangent space of $M_s$ at $Y=(y,f(y,s))$. 
Combining \eqref{a-vel2}, \eqref{a-vel4} and \eqref{a-vel7}-\eqref{a-vel9}, we obtain
\begin{equation}
0\leq \Big(\int_{{\rm Tan}_Y M_s}\nabla\tilde \psi\,d\mathcal H^{n-1}\Big)\cdot \{h+(u\cdot\nu)\nu-v\}(y,s)
\end{equation}
for any $\tilde \psi\in C_c^1(\mathbb R^n;\mathbb R^+)$. By integration by parts, the integral is 
perpendicular to ${\rm Tan}_Y M_s$. We may still choose a non-negative $\tilde \psi$ so that
the integral is equal to $\pm \nu(y,s)$, and since $\{h+(u\cdot\nu)\nu-v\}(y,s)$ is parallel to $\nu(y,s)$, 
it has to be $0$. This ends the proof that $v=h+(u\cdot\nu)\nu$ $\mathcal L^n$-a.e.~on
$B_1\times (0,1)$. 

Once we establish the PDE, the regularity of the solution is standard. For the completeness,
we present the proof. Fix $s\in (0,1)$ and let $\zeta\in C^{\infty}(B_1\times(0,1))$ be a non-negative cutoff function such that it vanishes on the parabolic boundary and 
$\zeta=1$ on $B_{1-s}\times[s^2,1)$. Then $\tilde{f}=\zeta\,f$ satisfies
\begin{equation}\label{eq:cutoff eq}
    \partial_t \tilde{f}-\sum_{i,j=1}^{n-1}a_{ij}(x,t)\partial^2_{x_i x_j}\tilde{f}=F(x,t)\ \ \mbox{ for\ $\mathcal L^n$-a.e.}\ (x,t)\in B_1\times(0,1).
\end{equation}
Here the coefficients $a_{ij}=\delta_{ij}-\frac{\partial_{x_i}f \partial_{x_j}f}{1+\lvert \nabla f\rvert^2}$ are uniformly elliptic and H$\ddot{\rm o}$lder continuous and we put
\[
F=\zeta u\cdot \nu\sqrt{1+\lvert \nabla f\rvert^2}+f\big(\partial_t\zeta-\sum_{i,j=1}^{n-1}a_{ij}\partial^2_{x_i x_j}\zeta\big)-2\sum_{i,j=1}^{n-1}a_{ij}\partial_{x_i}f\partial_{x_j}\zeta.
\]
By \eqref{lip} and \eqref{hdef3}, we have
$F\in L^q((0,1);L^p(B_1))$. 
Thus it holds from \cite[Theorem 7.3.9]{Kr08} that there exists a unique solution $f_1$
of \eqref{eq:cutoff eq} with $f_1(x,0)=0$ such that
\[
f_1\in W^{1,q}((0,1);L^p(B_{1}))\cap L^q((0,1);W^{2,p}(B_1)\cap W^{1,p}_0(B_1)).
\] 
Since both $f_1$ and $\tilde{f}$ are also the unique solution of \eqref{eq:cutoff eq} 
starting from $0$ in 
\[
W^{1,2}((0,1);L^2(B_1))\cap L^2((0,1);W^{2,2}(B_1)\cap W^{1,2}_0(B_1)),
\] 
it holds that $f_1=\tilde f$, and since $\tilde f=f$ on $B_{1-s}\times[s^2,1)$,
the proof is completed. 
\end{proof}
\section{Proof of Theorem \ref{main1}: the general case}\label{ver2}
In this section, we assume that $\partial_t f$ is a signed Radon measure. 
We first observe that we may regard 
the right-hand side of \eqref{Bineq} as a positive operator defined on $C_c^1((B_1\times\mathbb R)
\times(0,1))$. Then, it is well-known (see, for example, \cite[Corollary 1.8.1]{EvGa92})
that it extends uniquely 
to a nonnegative bounded linear operator on $C_c((B_1\times\mathbb R)\times(0,1))$. 
By the Riesz representation theorem, there exists a Radon measure $\xi$ whose support is
contained in $\cup_{t\in(0,1)}(M_t\times\{t\})$ and
\begin{equation}\label{ra1}
\int_0^{1}\int_{M_t}(\nabla\phi-\phi\,h)\cdot\{h+(u\cdot\nu)\nu\}+\partial_t\phi\,
d\mathcal H^{n-1}dt=\int_{\cup_{t\in(0,1)}(M_t\times\{t\})} \phi\,d\xi
\end{equation}
for all $\phi\in C_c^1((B_1\times\mathbb R)
\times(0,1))$. Here, note that $\phi$ need not be non-negative. 

Before we present the rigorous proof, we give a formal proof. 
Suppose for a moment that $f$ is smooth
and consider the signed distance function $d(\cdot,t):=\pm {\rm dist}\,(\cdot,M_t)$ 
depending above/below of $M_t$ in $B_1\times\mathbb R$. Given $\phi\in C_c^1(B_1\times
(0,1))$, we use $\phi\, d$ in \eqref{ra1}. Since $d=0$ on $M_t$, the right-hand side
is $0$. Moreover, $\partial_t(\phi\,d)=\phi\,\partial_t d=-\phi\frac{\partial_t f}{\sqrt{1+|\nabla f|^2}}$ and $\nabla(\phi\,d)=\phi\,\nu$ on $M_t$. Plugging these in, we obtain
$$
\int_0^1\int_{B_1}\phi\,\nu\cdot\{h+(u\cdot\nu)\nu\}\sqrt{1+|\nabla f|^2}-\phi\,\partial_t f\,dxdt=0
$$
and 
by the arbitrariness of $\phi$, we see
that the motion law \eqref{m1} is satisfied. Since the distance function is not 
smooth for $M_t$ in general, this is a formal argument, but we show that 
this approach works if $f$ is approximated properly with careful estimates on the
errors. 

Let $\rho(x,t)$ defined on $\mathbb R^{n}$ be the standard radially symmetric
mollifier with ${\rm spt}\,\rho\subset B_1^n$ 
and define $\rho^{\varepsilon}(x,t):=\varepsilon^{-n-1}
\rho(x/\varepsilon,t/\varepsilon^2)$ for $\varepsilon>0$ so that $\int_{\mathbb R^{n-1}\times\mathbb R} \rho^{\varepsilon}\,
dxdt=1$. Define
\begin{equation}
f^\varepsilon(x,t):=(\rho^\varepsilon\ast f)(x,t),\,\,\,\, M^\varepsilon_t:=\{(x,f^\varepsilon(x,t))\in \mathbb R^n\,:\, x\in B_{1-\eps}\},
\end{equation}
\begin{equation}
\tilde d^\varepsilon(X,t):=\left\{\begin{array}{ll} {\rm dist}\,(X,M^\varepsilon_t)
& \mbox{ if }x_n\geq f^\varepsilon(x_1,\ldots,x_{n-1},t),\\
-{\rm dist}\,(X,M^\varepsilon_t) & \mbox{ if } x_n<f^\varepsilon(x_1,\ldots,x_{n-1},t),
\end{array}\right.
\end{equation}
where $X=(x_1,\ldots,x_n)\in\mathbb R^n$. Let us fix a function $\eta^\eps\in C^\infty_c(\mathbb R;[-2\eps,2\eps])$
such that
$$
\eta^\eps(s)=s\,\,\mbox{ if }|s|\leq \eps,\,\,\,\eta^\eps(s)=0\,\,\mbox{ if }|s|\geq 2\eps
$$
and define
\begin{equation}
d^\eps(X,t):=\eta^\eps(\tilde d^\eps(X,t)).
\end{equation}
In the following, we fix $\phi\in C_c^1(B_1\times(0,1))$
and use $\phi\,d^\eps$ in \eqref{ra1},
so we are interested in the values of $d^\varepsilon$, $\nabla d^\varepsilon$, $\nabla^2 d^\varepsilon$
and $\partial_t d^\varepsilon$
on $M_t$. We regard $\phi$ to be a function defined on $B_1\times\mathbb R\times (0,1)$ 
which is independent of $x_n$ direction as well as a function on $B_1\times(0,1)$.
First, we show that $d^\eps$ is $C^\infty$ on a small neighborhood of $M_t$. The following
two lemmas are easy but we include the proofs for the reader's convenience:
\begin{lemma}\label{near1}
For $\varepsilon\in (0,1)$, we have
\begin{equation}\label{ra2}
\sup_{(x,t)\in B_{1-\varepsilon}\times(\varepsilon^2,1-\varepsilon^2)}|f^\varepsilon(x,t)-f(x,t)|\leq 2[f]_{C^{1,\alpha}}\varepsilon^{1+\alpha}.
\end{equation}
\end{lemma}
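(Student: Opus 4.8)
The plan is a routine space-time mollification estimate; the only point needing care is that the first-order term of the Taylor expansion must be killed by the radial symmetry of $\rho$, which is precisely why the bound is stated in terms of the seminorm $[f]_{C^{1,\alpha}}$ rather than $\|f\|_{C^1}$. First I would record that, since $\spt\rho\subset B_1^n$, the rescaled mollifier $\rho^\varepsilon$ is supported in $\{(y,s)\in\mathbb R^{n-1}\times\mathbb R:\,|y|\le\varepsilon,\ |s|\le\varepsilon^2\}$ and has total mass $1$. Consequently, for $(x,t)\in B_{1-\varepsilon}\times(\varepsilon^2,1-\varepsilon^2)$, every translated point $(x-y,t-s)$ entering $f^\varepsilon(x,t)=\int\rho^\varepsilon(y,s)\,f(x-y,t-s)\,dy\,ds$ lies in $B_1\times(0,1)$, so the difference may be written as
$$f^\varepsilon(x,t)-f(x,t)=\int\rho^\varepsilon(y,s)\,\big(f(x-y,t-s)-f(x,t)\big)\,dy\,ds.$$

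Next I would split $f(x-y,t-s)-f(x,t)=\big(f(x-y,t-s)-f(x,t-s)\big)+\big(f(x,t-s)-f(x,t)\big)$ and expand the first bracket to first order in the spatial variable: $f(x-y,t-s)-f(x,t-s)=-\nabla f(x,t-s)\cdot y+R$, where integrating the $\alpha$-H\"older bound for $\nabla f$ (the spatial part of \eqref{lip}) along the segment from $x$ to $x-y$ gives $|R|\le[f]_{C^{1,\alpha}}|y|^{1+\alpha}\le[f]_{C^{1,\alpha}}\varepsilon^{1+\alpha}$. Since $\rho^\varepsilon(\cdot,s)$ is radially symmetric in $y$ for each fixed $s$, the term $\int\rho^\varepsilon(y,s)\,\nabla f(x,t-s)\cdot y\,dy\,ds$ vanishes, so
$$f^\varepsilon(x,t)-f(x,t)=\int\rho^\varepsilon(y,s)\,R\,dy\,ds+\int\rho^\varepsilon(y,s)\,\big(f(x,t-s)-f(x,t)\big)\,dy\,ds,$$
and the second integrand is controlled by $[f]_{C^{1,\alpha}}|s|^{(1+\alpha)/2}\le[f]_{C^{1,\alpha}}\varepsilon^{1+\alpha}$ by the time-H\"older part of \eqref{lip} together with $|s|\le\varepsilon^2$.

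Finally, since $\rho^\varepsilon\ge0$ has unit mass, both integrals are bounded in absolute value by $[f]_{C^{1,\alpha}}\varepsilon^{1+\alpha}$, and adding them yields $|f^\varepsilon(x,t)-f(x,t)|\le 2[f]_{C^{1,\alpha}}\varepsilon^{1+\alpha}$ uniformly on $B_{1-\varepsilon}\times(\varepsilon^2,1-\varepsilon^2)$, as claimed. I do not expect a genuine obstacle here: checking that the mollification stays inside the domain $B_1\times(0,1)$ on the indicated subdomain, and the cancellation of the linear term via radial symmetry, are the only two things requiring a moment's attention, and both are elementary.
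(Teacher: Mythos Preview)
Your proof is correct and follows essentially the same approach as the paper: write $f^\varepsilon-f$ as an average of $f(x-y,t-s)-f(x,t)$, subtract the linear term $\nabla f\cdot y$ (which integrates to zero by the radial symmetry of $\rho^\varepsilon(\cdot,s)$), and bound the spatial remainder and the purely temporal increment each by $[f]_{C^{1,\alpha}}\varepsilon^{1+\alpha}$ using \eqref{lip}. The only cosmetic difference is that the paper expands about $\nabla f(x,t)$ rather than $\nabla f(x,t-s)$, but the arithmetic is the same.
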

\begin{proof}
For $(y,s)\in B_\varepsilon\times (-\varepsilon^2,\varepsilon^2)$ and $(x,t)\in B_{1-\varepsilon}\times(\varepsilon^2,1-\varepsilon^2)$, by \eqref{lip}, 
we have
\begin{equation}\label{ra3}
|f(x+y,t+s)-f(x,t)-\nabla f(x,t)\cdot y|\leq 2[f]_{C^{1,\alpha}}\varepsilon^{1+\alpha}.
\end{equation}
Since $\int_{B_\varepsilon}y \rho^\varepsilon(y,s)\,dy=0$, we have
\begin{equation}\label{ra4}
f^\varepsilon(x,t)-f(x,t)= \iint_{B_\varepsilon \times(-\varepsilon^2,\varepsilon^2)}
\rho^{\varepsilon}(y,s)(f(x+y,t+s)-f(x,t)-\nabla f(x,t)\cdot y)\,dyds.
\end{equation}
Then, \eqref{ra3} and \eqref{ra4} give \eqref{ra2}. 
\end{proof}
\begin{lemma} \label{near2}
For $\varepsilon\in(0,1)$, we have
\begin{equation}\label{ra6}
\sup_{(x,t)\in B_{1-\varepsilon}\times(\varepsilon^2,1-\varepsilon^2)}\Big|\nabla^2 f^\varepsilon(x,t)\Big|
\leq c(\rho)\varepsilon^{\alpha-1}.
\end{equation}
\end{lemma}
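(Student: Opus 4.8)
The plan is straightforward: \eqref{ra6} is a routine mollification estimate, and the key is to transfer one spatial derivative onto $f$, use that $\nabla\rho^\varepsilon$ has vanishing integral, and then combine the parabolic H\"{o}lder bound on $\nabla f$ coming from \eqref{lip} with the fact that $\nabla\rho^\varepsilon$ scales like $\varepsilon^{-1}$ in $L^1$. The gain $\varepsilon^{\alpha}$ from the H\"{o}lder continuity of $\nabla f$ against the loss $\varepsilon^{-1}$ from one differentiation of the mollifier is exactly what produces the exponent $\alpha-1$.

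In detail, I would first note that for $(x,t)\in B_{1-\varepsilon}\times(\varepsilon^2,1-\varepsilon^2)$ and $(y,s)\in\spt\rho^\varepsilon\subset B_\varepsilon\times(-\varepsilon^2,\varepsilon^2)$ one has $(x-y,t-s)\in B_1\times(0,1)$, so $f^\varepsilon(x,t)=\iint\rho^\varepsilon(y,s)f(x-y,t-s)\,dy\,ds$ is well defined and may be differentiated under the integral sign. Since $f$ is $C^1$ in the space variables, one derivative passes onto $f$, giving $\partial_{x_j}f^\varepsilon=\rho^\varepsilon\ast\partial_{x_j}f$ and hence $\partial_{x_i}\partial_{x_j}f^\varepsilon=(\partial_{x_i}\rho^\varepsilon)\ast\partial_{x_j}f$. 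Because $\iint\partial_{x_i}\rho^\varepsilon=0$, I subtract the value of $\partial_{x_j}f$ at $(x,t)$:
\[
\partial_{x_i}\partial_{x_j}f^\varepsilon(x,t)=\iint_{B_\varepsilon\times(-\varepsilon^2,\varepsilon^2)}\partial_{x_i}\rho^\varepsilon(y,s)\big(\partial_{x_j}f(x-y,t-s)-\partial_{x_j}f(x,t)\big)\,dy\,ds.
\]
On $\spt\rho^\varepsilon$ we have $|y|^\alpha\le\varepsilon^\alpha$ and $|s|^{\alpha/2}\le\varepsilon^\alpha$, so \eqref{lip} yields $|\partial_{x_j}f(x-y,t-s)-\partial_{x_j}f(x,t)|\le[f]_{C^{1,\alpha}}\varepsilon^\alpha$. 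Together with the scaling identity $\iint|\partial_{x_i}\rho^\varepsilon(y,s)|\,dy\,ds=\varepsilon^{-1}\iint|\partial_{x_i}\rho|$, which follows from $\partial_{x_i}\rho^\varepsilon(x,t)=\varepsilon^{-n-2}(\partial_{x_i}\rho)(x/\varepsilon,t/\varepsilon^2)$ and the change of variables $(x,t)\mapsto(\varepsilon x,\varepsilon^2 t)$, this gives $|\partial_{x_i}\partial_{x_j}f^\varepsilon(x,t)|\le c(\rho)[f]_{C^{1,\alpha}}\varepsilon^{\alpha-1}$; summing over $i,j$ yields \eqref{ra6}.

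There is essentially no serious obstacle; the only point requiring care is the bookkeeping of the anisotropic (parabolic) scaling of the mollifier, namely that the spatial gradient $\partial_{x_i}\rho^\varepsilon$ carries exactly one factor $\varepsilon^{-1}$ in $L^1$ while $\int\rho^\varepsilon=1$ is preserved. The constant $c$ produced this way depends on $n$, on $\rho$ (through $\int|\nabla\rho|$), and on $[f]_{C^{1,\alpha}}$, the latter two being part of the fixed data of Section \ref{setting} and hence absorbed into the notation $c(\rho)$. (An entirely analogous computation, using instead $\partial_t\rho^\varepsilon(x,t)=\varepsilon^{-n-3}(\partial_t\rho)(x/\varepsilon,t/\varepsilon^2)$ so that $\iint|\partial_t\rho^\varepsilon|=\varepsilon^{-2}\iint|\partial_t\rho|$, will give control of $\partial_t\nabla f^\varepsilon$ of order $\varepsilon^{\alpha-2}$ when that is needed later, but it is not part of the present statement.)
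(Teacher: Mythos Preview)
Your proof is correct and follows essentially the same route as the paper: pass one spatial derivative onto $f$, leave the other on $\rho^\varepsilon$, subtract the center value using $\iint\partial_{x_i}\rho^\varepsilon=0$, and combine the parabolic H\"{o}lder bound on $\nabla f$ with the scaling $\iint|\nabla\rho^\varepsilon|\leq c(\rho)\varepsilon^{-1}$. The paper's version is just a terser rendition of the same computation.
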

\begin{proof}
For $i,j=1,\ldots,n-1$, by the symmetry of $\rho^\varepsilon$, we have
\begin{equation}\label{ra5}
\begin{split}
\partial^2_{x_i x_j}f^\varepsilon(x,t)&=\iint \partial_{x_i}\rho^\varepsilon(x-y,t-s)\partial_{y_j}f(y,s)\,dyds\\
&=\iint \partial_{x_i}\rho^\varepsilon(x-y,t-s)\big(\partial_{y_j}f(y,s)-\partial_{y_j}f(x,t)\big)\,dyds.
\end{split}
\end{equation}
Since $\iint |\nabla\rho^\varepsilon|\leq c(\rho)\varepsilon^{-1}$, \eqref{lip} and \eqref{ra5} show \eqref{ra6}. 
\end{proof}
By \eqref{ra2}, for all sufficiently 
small $\eps$ and for any $X\in M_t\cap{\rm spt}\,\phi(\cdot,t)$, 
${\rm dist}\,(X,M_t^\eps)\leq 2[f]_{C^{1,\alpha}}\eps^{1+\alpha}<\eps$.
Thus we have 
\begin{equation}
d^\eps(X,t)=\tilde d^\eps(X,t)\mbox{ for }X\in M_t\cap{\rm spt}\,\phi(\cdot,t).
\end{equation}
Also by computing the first and second fundamental
forms of the graph of $f^\eps$, one finds that the principal curvatures correspond to the solutions $\lambda$ of $${\rm det}\,\Big(\frac{\nabla^2 f^\eps}
{\sqrt{1+|\nabla f^\eps|^2}}-\lambda(I+\nabla f^\eps\otimes\nabla f^\eps)\Big)=0.$$
Thus the principal curvatures of $M_t^\eps$ are bounded by $\|\nabla^2 f^\eps\|\leq c(\rho)\eps^{\alpha-1}$ 
in particular by \eqref{ra6}. The signed distance function $\tilde d^\eps$ is then known to be smooth in the $\varepsilon^{1-\alpha}/c(\rho)$-neighborhood of $M_t^\eps$. Since $M_t\cap{\rm spt}\,\phi(\cdot,t)$
is in $2[f]_{C^{1,\alpha}}\eps^{1+\alpha}$-neighborhood of $M_t^\eps$, it is also contained there
and thus $d^\eps=\tilde d^\eps$ is smooth on $M_t\cap{\rm spt}\,\phi(\cdot,t)$. 
By construction, then, $\phi\,d^\eps\in C_c^1(B_1\times\mathbb R\times(0,1))$ and we may
justify using it in \eqref{ra1}. Since $d^\eps$ and $\nabla d^\eps$ respectively
converge to $0$ 
and $\nu$ uniformly on 
${\rm spt}\,\phi\cap \cup_{t\in(0,1)}(M_t\times\{t\})$, we can deduce that
\begin{equation}\label{ra7}
\begin{split}
0&=\lim_{\eps\rightarrow 0}\int_{\cup_{t\in(0,1)}(M_t\times\{t\})}\phi\,d^\eps\,d\xi\\
&=\lim_{\eps\rightarrow 0}\int_0^1\int_{M_t}\{\nabla(\phi\,d^\eps)-\phi\,d^\eps\,h\}\,\cdot\,\{
h+(u\,\cdot\,\nu)\,\nu\}+\partial_t(\phi\,d^\eps)\,d\mathcal H^{n-1}dt \\
&=
\int_0^1\int_{M_t}\phi\,\nu\,\cdot\{h+(u\,\cdot\,\nu)\nu\}\,d\mathcal H^{n-1}dt
+\lim_{\eps\rightarrow 0}\int_0^1\int_{M_t}\phi\,\partial_t d^\eps\,d\mathcal H^{n-1}dt.
\end{split}
\end{equation}
Here we prove
\begin{lemma}
\begin{equation}
\lim_{\eps\rightarrow 0}\int_0^1\int_{M_t}\phi\,\partial_t d^\eps\,d\mathcal H^{n-1}dt
=\int_0^1\int_{B_1}\partial_t\phi\,f\,dxdt.
\end{equation}
\end{lemma}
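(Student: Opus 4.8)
The plan is to rewrite $\partial_t d^\eps$ on $M_t$ explicitly in terms of the mollified graph $f^\eps$, to transport the integral over $M_t$ to an integral over $B_1$ by means of the nearest-point projection onto $M_t^\eps$, and then to pass to the limit, using that $\partial_t f^\eps=\rho^\eps\ast(\partial_t f)$ is bounded in $L^1$ on compact subsets of $B_1\times(0,1)$ uniformly in $\eps$ because $\partial_t f$ is a signed Radon measure. For the first point, recall that for $\eps$ small the function $d^\eps=\tilde d^\eps$ is $C^\infty$ on a neighborhood of $M_t\cap\spt\phi(\cdot,t)$, since the principal curvatures of $M_t^\eps$ are $O(\eps^{\alpha-1})$ by \eqref{ra6} while $M_t$ lies within distance $2[f]_{C^{1,\alpha}}\eps^{1+\alpha}$ of $M_t^\eps$ by \eqref{ra2}. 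Writing a point near $M_t^\eps$ in its normal coordinates as $X=(z,f^\eps(z,t))+r\,\nu^\eps(z,t)$, where $(z,f^\eps(z,t))$ is its nearest point on $M_t^\eps$, $r=\tilde d^\eps(X,t)$, and $\nu^\eps:=(-\nabla f^\eps,1)/\sqrt{1+|\nabla f^\eps|^2}$, differentiating this identity in $t$ with $X$ fixed and pairing it with $\nu^\eps$ (using $\partial_t\nu^\eps\cdot\nu^\eps=0$ and that the $z$-derivatives of $(z,f^\eps(z,t))$ and of $\nu^\eps$ are orthogonal to $\nu^\eps$) yields the signed-distance identity
\[
\partial_t d^\eps(X,t)=-\frac{\partial_t f^\eps(z,t)}{\sqrt{1+|\nabla f^\eps(z,t)|^2}}.
\]

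Next I would transport the integral. Fix $\sigma\in(0,1)$ with $\spt\phi\subset B_{1-\sigma}\times(\sigma,1-\sigma)$, and for $x\in B_{1-\sigma/2}$ and $\eps$ small let $z=\Phi_t(x)$ be the base point of the nearest point of $(x,f(x,t))$ on $M_t^\eps$, so that $z=x+(f(x,t)-f^\eps(z,t))\nabla f^\eps(z,t)$. From $|f-f^\eps|=O(\eps^{1+\alpha})$ and $|\nabla f-\nabla f^\eps|=O(\eps^{\alpha})$ (both by \eqref{lip}, as in Lemma \ref{near1}) together with $\|\nabla^2 f^\eps\|=O(\eps^{\alpha-1})$ (Lemma \ref{near2}), one checks that $\Phi_t=\mathrm{id}+\Psi_t$ with $\|\Psi_t\|_{C^0}=O(\eps^{1+\alpha})$ and $\|D\Psi_t\|_{C^0}=O(\eps^{\alpha})$; hence, for $\eps$ small, $\Phi_t$ is a $C^1$-diffeomorphism of $B_{1-\sigma/2}$ onto an open set containing $\spt\phi(\cdot,t)$, with $\det D\Phi_t=1+O(\eps^{\alpha})$ and $\Phi_t^{-1}(z)=z+O(\eps^{1+\alpha})$. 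Writing the integral over $M_t$ as an integral over $B_1$ in the $x$-variable (the integrand is supported where $\phi(\cdot,t)\neq0$, on which $d^\eps$ and $\Phi_t$ are smooth), inserting the Step~1 formula, substituting $z=\Phi_t(x)$ so that the area factor $\sqrt{1+|\nabla f^\eps(z,t)|^2}$ cancels the denominator, and collecting the errors (using $\phi\in C^1$, $f\in C^{1,\alpha}$, and $\sqrt{1+|\nabla f|^2}/\sqrt{1+|\nabla f^\eps|^2}=1+O(\eps^{\alpha})$), one gets
\[
\int_{M_t}\phi\,\partial_t d^\eps\,d\mathcal H^{n-1}=-\int_{B_1}\phi(z,t)\,\partial_t f^\eps(z,t)\,dz+R^\eps(t),\qquad
|R^\eps(t)|\le C\eps^{\alpha}\int_{B_{1-\sigma/4}}|\partial_t f^\eps(z,t)|\,dz,
\]
where $R^\eps(t)=0$ whenever $\phi(\cdot,t)\equiv0$ and $C=C(n,\rho,\sigma,\|\phi\|_{C^1},[f]_{C^{1,\alpha}})$.

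Finally I would pass to the limit in $\eps$. Integrating in $t$ and integrating by parts in $t$ in the principal term (there is no boundary contribution, as $\phi$ is compactly supported in $(0,1)$ and $f^\eps$ is smooth in $t$) gives $-\int_0^1\int_{B_1}\phi\,\partial_t f^\eps\,dz\,dt=\int_0^1\int_{B_1}\partial_t\phi\,f^\eps\,dz\,dt$, which converges to $\int_0^1\int_{B_1}\partial_t\phi\,f\,dz\,dt$ since $f^\eps\to f$ uniformly on $\spt\phi$ by \eqref{ra2}. Since $\partial_t f^\eps=\rho^\eps\ast(\partial_t f)$ and $R^\eps(t)=0$ for $t\notin(\sigma,1-\sigma)$, for $\eps$ small we have $\int_0^1|R^\eps(t)|\,dt\le C\eps^{\alpha}\int_\sigma^{1-\sigma}\int_{B_{1-\sigma/4}}|\partial_t f^\eps|\,dz\,dt\le C\eps^{\alpha}\,|\partial_t f|(K)$ for a fixed compact $K\subset B_1\times(0,1)$, using $\|\rho^\eps\ast(\partial_t f)\|_{L^1(A)}\le|\partial_t f|(A+\spt\rho^\eps)$ and $\int\rho^\eps=1$; hence $\int_0^1 R^\eps(t)\,dt\to0$, and combining the three steps gives the assertion.

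The step I expect to be the main obstacle is the second one: the uniform-in-$\eps$ control of $\Phi_t$, and in particular the Jacobian estimate $\det D\Phi_t=1+O(\eps^{\alpha})$, must be extracted from only the $C^{1,\alpha}$ regularity of $f$ together with the blow-up rate $O(\eps^{\alpha-1})$ of the curvature of $M_t^\eps$. What makes it go through is that the vertical gap $|f-f^\eps|=O(\eps^{1+\alpha})$ beats that curvature — their product is $O(\eps^{2\alpha})$ — so the second-order corrections both to $\Phi_t$ and to the normal-coordinate identity are negligible and only $O(\eps^{\alpha})$ errors survive; these are harmless precisely because the signed Radon measure hypothesis makes $\|\partial_t f^\eps\|_{L^1(K)}$ bounded uniformly in $\eps$.
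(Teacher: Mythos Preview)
Your proof is correct and follows essentially the same route as the paper: the signed-distance identity $\partial_t d^\eps=-\partial_t f^\eps/\sqrt{1+|\nabla f^\eps|^2}$ at the projected point, the change of variables via the nearest-point projection (your $\Phi_t$ is the paper's $F^\eps$) with the same $O(\eps^\alpha)$ Jacobian estimates coming from $|f-f^\eps|=O(\eps^{1+\alpha})$ against $\|\nabla^2 f^\eps\|=O(\eps^{\alpha-1})$, and the passage to the limit through the Radon measure hypothesis on $\partial_t f$. The only difference is organizational in the last step: the paper writes $\partial_t f^\eps(z,t)=\mathcal R[\rho^\eps(\cdot-z,\cdot-t)]$ with $\mathcal R[\varphi]=-\iint\partial_t\varphi\,f$, uses Fubini to pull $\mathcal R$ outside the $x$-integral, and then invokes continuity of $\mathcal R$ under uniform convergence, whereas you split into a main term (handled by integration by parts in $t$ and $f^\eps\to f$) plus an $O(\eps^\alpha)$ error controlled by the uniform $L^1$ bound $\|\rho^\eps\ast(\partial_t f)\|_{L^1(K)}\le|\partial_t f|(K+\spt\rho^\eps)$---both use the Radon measure hypothesis in the same essential way.
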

Once this is proved, with \eqref{ra7}, it implies that $f$ has the weak $L^2$ derivative
$\partial_t f=\{(h+u)\,\cdot\,\nu\}\sqrt{1+|\nabla f|^2}$ on $B_1\times(0,1)$, which is 
\eqref{m1}, and the argument for \eqref{m2} is the same as the previous section. 
\begin{proof}
We aim to change the domain of integration from $M_t$ to $M_t^\eps$ by using the nearest point
projection as follows. 
For each $X\in M_t\cap {\rm spt}\,\phi(\cdot,t)$, there exists a unique $X^*\in M_t^\eps$
such that ${\rm dist}\,(X,M_t^\eps)=|X-X^*|\leq 2[f]_{C^{1,\alpha}}\eps^{1+\alpha}$. 
By writing these two points as $X=(x,f(x,t))$ and $X^*=(x^*,f^\eps(x^*,t))$, 
note that these points are related by the following equation:
\begin{equation}\label{dist}
X^*+d^\eps(X,t)\nabla d^\eps(X^*,t)=X. 
\end{equation}
Since $X^*$ is the nearest point to $X$ in $M_t^\eps$, $X-X^*$
is perpendicular to $M_t^\eps$ at $X^*$ and the unit normal vector at $X^*$ is given by $\nabla d^\eps(X^*,t)$.
Then it is clear that \eqref{dist} holds. Since $\nabla d^\eps(X,t)=\nabla d^\eps(X^*,t)$,
we have
\begin{equation}\label{dist1}
X^*=X-d^\eps(X,t)\nabla d^\eps(X,t).
\end{equation}
We next claim that
\begin{equation}\label{dist2}
\partial_t d^\eps(X,t)=\partial_t d^\eps(X^*,t)=-\partial_t f^\eps(x^*,t)/\sqrt{1+|\nabla f^\eps
(x^*,t)|^2}.
\end{equation}
By differentiating 
$d^\eps(X-d^\eps(X,t)\nabla d^\eps(X,t),t)=0$ (which follows from $d^\eps(X^*,t)=0$) 
with respect to $t$, we obtain
\begin{equation}\label{dist3}
\nabla d^\eps(X^*,t)\cdot(-\partial_t d^\eps(X,t)\nabla d^\eps(X,t)-d^\eps(X,t)\partial_t\nabla
d^\eps(X,t))+\partial_t d^\eps(X^*,t)=0.
\end{equation}
Since $\nabla d^\eps(X^*,t)\cdot\nabla d^\eps(X,t)=1$ and $$\nabla d^\eps(X^*,t)\cdot\partial_t\nabla d^\eps
(X,t)=\nabla d^\eps(X,t)\cdot \partial_t\nabla d^\eps(X,t)=\frac12\partial_t|\nabla d^\eps(X,t)|^2=0,$$
we have the first equality of \eqref{dist2} from \eqref{dist3}. The second equality of \eqref{dist2}
is obtained by differentiating $d^\eps(x,f^\eps(x,t),t)=0$ (which follows from $(x,f^\eps(x,t))\in
M^\eps_t$) with respect to $t$ and by
using $\partial_{x_n} d^\eps(x,f^\eps(x,t),t)=1/\sqrt{1+|\nabla f^\eps|^2}$. 

Next, consider the map $F^\eps(\cdot,t)$ from $B_1\cap {\rm spt}\,\phi(\cdot,t)$ to $B_1$ defined by
\begin{equation}
x\longmapsto X=(x,f(x,t))\in M_t\longmapsto X^*=(x^*,f^\eps(x^*,t))\in M_t^\eps \longmapsto x^*=:F^\eps(x,t).
\end{equation}
\begin{figure}[tbh]
\centering
\includegraphics[width=10cm]{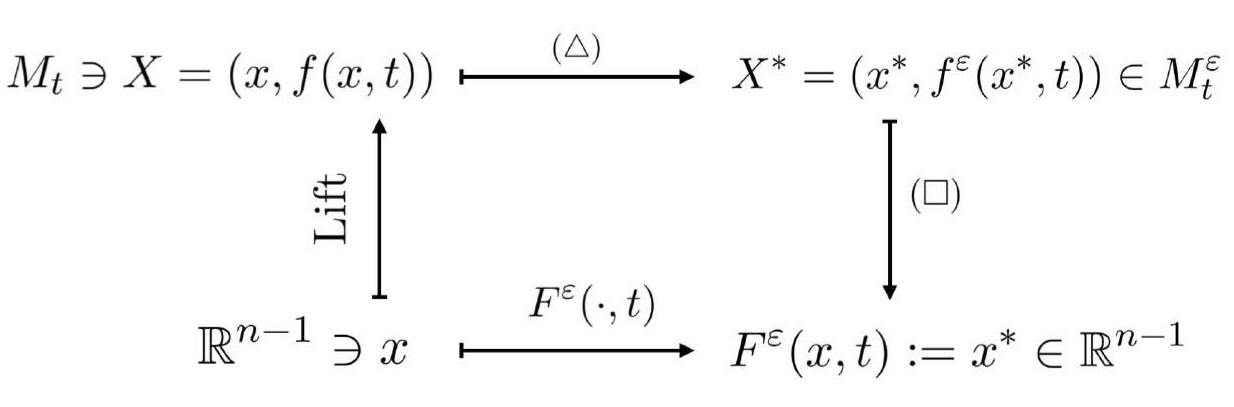}
\caption{}
\label{fig:diagram}
\end{figure}
As indicated in Figure 1, $x$ is lifted up to $M_t$ first,
and mapped to the nearest point on $M_t^\varepsilon$
(indicated by ($\triangle$)),
and then projected down to $\mathbb R^{n-1}$ (indicated by ($\square$)).
More explicitly, by projecting the equation \eqref{dist1} to $\mathbb R^{n-1}$, we have
\begin{equation}
F^\eps(x,t)=x^*=x-d^\eps(x,f(x,t),t)\nabla' d^\eps(x,f(x,t),t),
\end{equation}
where we recall that $\nabla'=(\partial_{x_1},\ldots,\partial_{x_{n-1}})$.
Since $d^\eps$ is $C^\infty$
as a function of $(X,t)$ near $M_t\cap{\rm spt}\,\phi(\cdot,t)$ and $f$ is $C^{1,\alpha}$, $F^\eps$ is also $C^{1,\alpha}((B_1\times(0,1))\cap {\rm spt}\,
\phi)$. We next compute $\nabla F^\eps(x,t)$ as
\begin{equation}\label{dist4}
\nabla F^\eps(x,t)=I-\nabla'\,d^\eps\otimes\nabla'\,d^\eps-\partial_{x_n}d^\eps\nabla f
\otimes\nabla'\,d^\eps-\big(\partial_{x_n}\nabla' \,d^\eps\otimes\nabla f+\nabla^{'2}\,
d^\eps\big)\,d^\eps.
\end{equation}
Since the principal curvatures of $M_t^\eps$ are bounded by $c\eps^{\alpha-1}$ and $M_t$ is
within $c\eps^{1+\alpha}$-neighborhood of $M_t^\eps$, $\nabla^2 d^\eps$ on $M_t$ is bounded by $c\eps^{\alpha-1}$
as well (see \cite[14.6]{GiTr83} for the expression of $\nabla^2 d^\eps$). Since $|d^\eps|\leq c\eps^{1+\alpha}$ on $M_t$, the last two terms of \eqref{dist4} involving $\nabla^2 d^\eps$ are bounded by $c\eps^{2\alpha}$ and vanish as $\eps\rightarrow 0$. To estimate the second and third terms of \eqref{dist4}, we compute
\begin{equation*}
\begin{split}
\Big|\nabla'\,d^\eps+&\partial_{x_n}\,d^\eps\nabla f\Big|=\Big|-\frac{\nabla f^\eps}{\sqrt{1+|\nabla f^\eps|^2}}\big|_{(x^*,t)}+\frac{1}{\sqrt{1+|\nabla f^\eps|^2}}\big|_{(x^*,t)}\nabla f(x,t)\Big| \\
&
\leq |\nabla f^\eps(x^*,t)-\nabla f(x,t)|\leq |\nabla f^\eps(x^*,t)-\nabla f(x^*,t)|+|\nabla
f(x^*,t)-\nabla f(x,t)|
\end{split}
\end{equation*}
and since $|x-x^*|\leq c\eps^{1+\alpha}$, this also vanishes as $\eps\rightarrow 0$. Thus
we have
\begin{equation*}
\lim_{\eps\rightarrow 0}\nabla F^\eps(x,t)=I
\end{equation*}
uniformly on $(B_1\times (0,1))\cap{\rm spt}\,\phi$. By the Inverse Function Theorem, for all
sufficiently small $\eps$, $F^\eps(\cdot,t)$ has the $C^1$ inverse function $G^\eps(\cdot,t)\,:\,F^\eps(\cdot,t)({\rm spt}\,
\phi(\cdot,t))\rightarrow {\rm spt}\,\phi(\cdot,t)$. One can also check that $G^\eps$ is H\"{o}lder
continuous in the direction of $t$ and 
\begin{equation}\label{dist5}
\lim_{\eps\rightarrow 0}\nabla G^\eps(x^*,t)=I.
\end{equation}
Now we compute (evaluating $\phi$ and $d^\eps$ at $(x,f(x,t),t)$) using \eqref{dist2}
\begin{equation}\label{dist6}
\begin{split}
\int_0^1\int_{M_t}&\phi\,\partial_t d^\eps\,d\mathcal H^{n-1}dt=\iint_{B_1\times(0,1)}
\phi\,\partial_t d^\eps \sqrt{1+|\nabla f|^2}\,dxdt \\
&=-\iint_{B_1\times(0,1)}\phi\,\partial_t f^\eps(F^\eps(x,t),t)\frac{\sqrt{1+|\nabla f(x,t)|^2}}{\sqrt{1+|\nabla f^\eps(F^\eps(
x,t),t)|^2}}\,dxdt.
\end{split}
\end{equation}
By our assumption, $\partial_t f$ is a signed Radon measure, so that the functional 
$$\mathcal{R}[\varphi]:=-\iint_{B_1\times(0,1)} \partial_t\varphi\, fdxdt$$ 
defined for $\varphi\in C^1_c(B_1\times(0,1))$ is continuous with respect to 
the topology of uniform convergence. We also have
\[
\partial_t f^{\varepsilon}(z,t)=\iint_{B_1\times(0,1)} \partial_t\rho^{\varepsilon}(z-y,t-s) f(y,s)\,dyds=\mathcal{R}[\rho^{\varepsilon}(\cdot-z,\cdot-t)]
\]
and the Fubini theorem shows (the domain of integration $B_1\times(0,1)$ omitted)
\[
\iint \varphi(x,t)\,\mathcal{R}[\rho^{\varepsilon}(\cdot-F^\eps(x,t),\cdot-t)]\,dxdt=\mathcal{R}\left[\iint \varphi(x,t)\rho^{\varepsilon}(\cdot-F^\eps(x,t).\cdot-t)\,dxdt\right].
\]
Using these, we see that
\begin{equation}\label{dist7}
{\small \begin{split}
&-\iint  \phi\, \partial_t f^{\varepsilon}(F^{\varepsilon}(x,t),t) \sqrt{\frac{1+|\nabla f(x,t)|^2}{1+|\nabla f^{\varepsilon}(F^{\varepsilon}(x,t),t)|^2}}\,dxdt
\\&=-\iint \phi\, \mathcal R[\rho^\eps(\cdot-F^\eps(x,t),\cdot-t)]\sqrt{\frac{1+|\nabla f(x,t)|^2}{1+|\nabla f^{\varepsilon}(F^{\varepsilon}(x,t),t)|^2}}\,dxdt
\\&=-\mathcal{R}\left[\iint \rho^{\varepsilon}(\cdot-F^{\varepsilon}(x,t),\cdot-t)\phi(x,t)\sqrt{\frac{1+|\nabla f(x,t)|^2}{1+|\nabla f^{\varepsilon}(F^{\varepsilon}(x,t),t)|^2}}\,dxdt\right]\\&=-\mathcal{R}\left[\iint \rho^{\varepsilon}(\cdot-x^*,\cdot-t)\phi(G^{\varepsilon}(x^*,t),t)\sqrt{\frac{1+|\nabla f(G^{\varepsilon}(x^*,t),t)|^2}{1+|\nabla f^{\varepsilon}(x^*,t)|^2}}|\mathrm{det}\nabla G^{\varepsilon}|dx^*dt\right].
\end{split}}
\end{equation}
In the last line, we changed variables $x=G^\eps(x^*,t)$. 
It is clear form \eqref{dist5} that the function 
\[
(y,s)\mapsto \iint \rho^{\varepsilon}(y-x^*,s-t)\phi(G^{\varepsilon}(x^*,t),t)\sqrt{\frac{1+|\nabla f(G^{\varepsilon}(x^*,t),t)|^2}{1+|\nabla f^{\varepsilon}(x^*,t)|^2}}|\mathrm{det}\nabla G^{\varepsilon}|\,dx^*dt
\]
converges to $\phi(y,s)$ as $\varepsilon\rightarrow0$ uniformly for $(y,s)\in B_1\times(0,1)$. Therefore \eqref{dist6} and \eqref{dist7} show
\[
\lim_{\eps\rightarrow 0}\int_0^1\int_{M_t}\phi\,\partial_t d^\eps\,d\mathcal H^{n-1}dt=-\mathcal R
[\phi]
\]
and the proof is completed.
\end{proof}
\section{Proof of Theorem \ref{th:existence}}\label{PDE}
Since the short-time existence of the $C^{1,\alpha}$ solution 
is already established in \cite[Theorem 2.5(1)(3)]{TaTo16}, 
we only need to consider the situation described in Section
\ref{setting} after a suitable change of variables. To apply 
Theorem \ref{main1}, the only missing piece is that $\partial_t f$ 
is a signed Radon measure. As we alluded in the introduction, we
show that $\partial_t f\in L^2$ in this case of the limit of 
Allen-Cahn equation with a transport term. In \cite{TaTo16}, 
the method of the proof is to approximate $v=h+u^\perp$ by the Allen--Cahn equation with a transport term coming from $u$ as follows. 
With an appropriate initial datum $\varphi^\eps|_{t=0}$ 
derived from $M_0$, one solves
\begin{equation}\label{eq:Allen-Cahn}
    \partial_t\varphi^{\varepsilon}+u^{\varepsilon}\cdot\nabla\varphi^{\varepsilon}=\Delta\varphi^{\varepsilon}-\frac{W'(\varphi^{\varepsilon})}{\varepsilon^2}\ \ {\rm in}\ \ \Omega\times(0,\infty),
\end{equation}
where $\varepsilon>0$ is a small parameter tending to $0$ and $u^{\varepsilon}$ is a smooth approximation of $u$. Moreover $W\in C^3(\R;\mathbb R^+)$ satisfies
\[W(\pm1)=0,\ \ W'<0\ {\rm on}\ (\gamma,1)\ {\rm and}\ W'>0\ {\rm on}\ (-1,\gamma)\ {\rm for\ some}\ \gamma\in(-1,1)\]
and
\[W''(x)\ge\kappa\ {\rm for\ all}\ 1\ge|x|\ge\alpha\ {\rm for\ some}\ \alpha\in(0,1),\ \kappa>0.\]
Define a constant $\sigma:=\int_{-1}^1\sqrt{2W(s)}\,ds$. Then it is 
proved that there exists a sequence $\varepsilon_i\downarrow0$ such that
\[\mu^{\varepsilon_i}_t:=\frac{1}{\sigma}\left(\frac{\varepsilon_i|\nabla\varphi^{\varepsilon_i}(x,t)|^2}{2}+\frac{W(\varphi^{\varepsilon_i}(x,t))}{\varepsilon_i}\right)\,dx\rightarrow\mu_t\ \ {\rm as}\ i\rightarrow\infty\]
for all $t\geq 0$ as Radon measures and that
$\mu_t$ is rectifiable and integral measure for $\mathcal L^1$-a.e.~$t\geq 0$.
The measure $\mu_t$ induces naturally a unique integral varifold $V_t$ for $\mathcal L^1$-a.e.~$t\ge0$ and it is proved that $\{V_t\}_{t\geq 0}$ is a weak solution 
of $v=h+u^\perp$ (see \cite[Theorem 2.2]{TaTo16} for the precise
statement). It is also proved (see \cite[Proposition 8.4]{TaTo16}) that there exists a limit ``phase function''
$w\in BV_{loc}(\Omega\times[0,\infty))\cap C^{\frac{1}{2}}_{loc}([0,\infty);L^1(\Omega))$ such that
\[w^{\varepsilon_i}:=\Phi\circ\varphi^{\varepsilon_i}(\cdot,t)\rightarrow w(\cdot,t)\ \ {\rm in}\ L^1_{loc}(\Omega)\ \ {\rm as}\ \ i\rightarrow\infty\ \ {\rm for\ all}\ t\ge0.\]
Here $\Phi(s):=\frac{1}{\sigma}\int_{-1}^s\sqrt{2W(y)}\,dy$.  
Using the estimates in the proof of \cite[Theorem 2.2]{TaTo16}, one can 
prove the following.
\begin{lemma}\label{lm:t-derivative}
Let $\varphi^{\varepsilon}$ be the solution of \eqref{eq:Allen-Cahn}
constructed in \cite{TaTo16}. Then there exists a constant $C$ independent of $\varepsilon>0$ such that $w^{\varepsilon}=\Phi\circ\varphi^{\varepsilon}$ satisfies 
\[ \int_0^T\int_{\Omega}|\psi\, \partial_t w^{\varepsilon}|\,dxdt\le C\left(\int_0^T\int_{\Omega}\psi^2\,d\mu_t^{\varepsilon}dt\right)^{\frac{1}{2}}\]
for all $\psi\in C_c (\Omega \times (0,T))$.
\end{lemma}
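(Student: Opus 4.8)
The plan is to differentiate $w^\varepsilon=\Phi\circ\varphi^\varepsilon$ in time, rewrite the outcome by means of the Allen--Cahn equation \eqref{eq:Allen-Cahn}, and then bound the two resulting terms by Cauchy--Schwarz, matching the factors that are singular as $\varepsilon\to0$ with the energy density $e^\varepsilon:=\tfrac{\varepsilon}{2}|\nabla\varphi^\varepsilon|^2+\tfrac1\varepsilon W(\varphi^\varepsilon)$ (so that $d\mu_t^\varepsilon=\sigma^{-1}e^\varepsilon\,dx$) and the remaining factors with uniform-in-$\varepsilon$ bounds already available from the proof of \cite[Theorem 2.2]{TaTo16}. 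Since $\Phi'(s)=\sigma^{-1}\sqrt{2W(s)}$, the chain rule together with \eqref{eq:Allen-Cahn} gives the pointwise identity
\[
\partial_t w^\varepsilon=\Phi'(\varphi^\varepsilon)\Big(\Delta\varphi^\varepsilon-\frac{W'(\varphi^\varepsilon)}{\varepsilon^2}\Big)-u^\varepsilon\cdot\nabla w^\varepsilon,\qquad\nabla w^\varepsilon=\Phi'(\varphi^\varepsilon)\nabla\varphi^\varepsilon,
\]
valid classically since each $\varphi^\varepsilon$ is a smooth solution.

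Two elementary pointwise inequalities do the work. First, from $e^\varepsilon\ge\tfrac1\varepsilon W(\varphi^\varepsilon)$ one gets $|\Phi'(\varphi^\varepsilon)|=\sigma^{-1}\sqrt{2W(\varphi^\varepsilon)}\le\sigma^{-1}\sqrt{2\varepsilon\,e^\varepsilon}$. Second, writing $|\nabla w^\varepsilon|=\sigma^{-1}\sqrt{2\varepsilon^{-1}W(\varphi^\varepsilon)}\,\sqrt\varepsilon\,|\nabla\varphi^\varepsilon|$ and using Young's inequality yields $|\nabla w^\varepsilon|\le\sigma^{-1}e^\varepsilon$ (the bound behind the $BV$-convergence $w^\varepsilon\to w$). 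Substituting the first inequality into the first term of the identity and applying Cauchy--Schwarz in $(x,t)$, that term is bounded by $\sigma^{-1}\sqrt2$ times the product of $\big(\int_0^T\!\int_\Omega\psi^2 e^\varepsilon\,dx\,dt\big)^{1/2}$ and $\big(\int_0^T\!\int_\Omega\varepsilon(\Delta\varphi^\varepsilon-W'(\varphi^\varepsilon)/\varepsilon^2)^2\,dx\,dt\big)^{1/2}$; the first of these equals $\sqrt\sigma$ times the claimed right-hand side, and the second is bounded uniformly in $\varepsilon$, being the dissipation obtained by integrating in time the energy identity
\[
\frac{d}{dt}\int_\Omega e^\varepsilon\,dx=-\int_\Omega\varepsilon\Big(\Delta\varphi^\varepsilon-\frac{W'(\varphi^\varepsilon)}{\varepsilon^2}\Big)^2dx+\int_\Omega\varepsilon\Big(\Delta\varphi^\varepsilon-\frac{W'(\varphi^\varepsilon)}{\varepsilon^2}\Big)(u^\varepsilon\cdot\nabla\varphi^\varepsilon)\,dx
\]
and absorbing the last term with Young's inequality, which leaves a uniform bound on the initial energy plus $\int_0^T\!\int_\Omega|u^\varepsilon|^2\,d\mu_t^\varepsilon\,dt$. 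For the second term of the identity, the bound $|\nabla w^\varepsilon|\le\sigma^{-1}e^\varepsilon$ turns $\int_0^T\!\int_\Omega|\psi\,u^\varepsilon\cdot\nabla w^\varepsilon|\,dx\,dt$ into at most $\int_0^T\!\int_\Omega|\psi|\,|u^\varepsilon|\,d\mu_t^\varepsilon\,dt$, and another Cauchy--Schwarz bounds this by $\big(\int_0^T\!\int_\Omega\psi^2\,d\mu_t^\varepsilon\,dt\big)^{1/2}$ times $\big(\int_0^T\!\int_\Omega|u^\varepsilon|^2\,d\mu_t^\varepsilon\,dt\big)^{1/2}$. Adding the two estimates and using $\int_0^T\!\int_\Omega\psi^2 e^\varepsilon\,dx\,dt=\sigma\int_0^T\!\int_\Omega\psi^2\,d\mu_t^\varepsilon\,dt$ yields the lemma.

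The hard part is therefore not the manipulations but verifying that the two uniform-in-$\varepsilon$ inputs, namely $\int_0^T\!\int_\Omega\varepsilon(\Delta\varphi^\varepsilon-W'(\varphi^\varepsilon)/\varepsilon^2)^2\,dx\,dt\le C$ and $\int_0^T\!\int_\Omega|u^\varepsilon|^2\,d\mu_t^\varepsilon\,dt\le C$, genuinely hold on $[0,T]$ with $T$ as furnished by Theorem~\ref{th:existence} and with $C$ independent of $\varepsilon$. Both are contained in the estimates behind \cite[Theorem 2.2]{TaTo16}: the first comes from the energy identity above together with a uniform bound on the initial energy (coming from the choice of $\varphi^\varepsilon|_{t=0}$ from $M_0$), and the second is exactly where the hypotheses on $\beta$ and $\gamma$ are used, through $u\in L^\gamma_{loc}([0,\infty);W^{1,\beta}(\Omega))$ combined with the uniform mass and density-ratio bounds on $\mu_t^\varepsilon$. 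Citing those estimates carefully is the only substantive step; no estimate beyond \cite{TaTo16} is needed.
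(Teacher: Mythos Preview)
Your argument is correct, but it takes a detour compared with the paper's proof. The paper never invokes the PDE \eqref{eq:Allen-Cahn} at all: it simply writes $\partial_t w^\varepsilon=\sigma^{-1}\sqrt{2W(\varphi^\varepsilon)}\,\partial_t\varphi^\varepsilon$, applies a single Cauchy--Schwarz with the splitting $|\psi|\sqrt{W(\varphi^\varepsilon)/\varepsilon}\cdot\sqrt{\varepsilon}\,|\partial_t\varphi^\varepsilon|$, and then cites from \cite{TaTo16} the one uniform bound $\int_0^T\!\int_\Omega \varepsilon(\partial_t\varphi^\varepsilon)^2\,dx\,dt\le C$. By contrast, you first substitute the equation to replace $\partial_t\varphi^\varepsilon$ by $\Delta\varphi^\varepsilon-W'(\varphi^\varepsilon)/\varepsilon^2-u^\varepsilon\cdot\nabla\varphi^\varepsilon$, which produces two terms and forces you to quote two separate uniform inputs from \cite{TaTo16} (the dissipation bound and the $L^2$ control of $u^\varepsilon$ against $\mu_t^\varepsilon$). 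In effect your two inputs together reconstruct the single $\int\varepsilon(\partial_t\varphi^\varepsilon)^2$ bound the paper cites directly, so the end result is the same but the paper's route is shorter and needs less from \cite{TaTo16}. Your decomposition does have the mild advantage of making explicit where the transport term $u^\varepsilon$ enters, but for the purpose of this lemma that is unnecessary.
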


\begin{proof} By the Cauchy-Schwarz inequality, we have
\begin{align*}
\int^T_0 \int_{\Omega} |\psi \,\partial_t w^{\varepsilon} |\, dxdt  &=\int^{T}_0 \int_{\Omega}  |\psi|\, \sigma^{-1} \sqrt{2W(\varphi^{\varepsilon})} \, |\partial _t \varphi ^{\varepsilon}|\, dxdt  \\
    & \le 2\sigma^{-1} \left(\int^T_0 \int_{\Omega}\psi ^2 \frac{W(\varphi^{\varepsilon})}{\varepsilon} \,dxdt\right)^{\frac{1}{2}}\left(\int^T_0 \int_{\Omega}\frac{\varepsilon(\partial_t\varphi^{\varepsilon})^2}{2}\,dxdt\right)^{\frac{1}{2}}.
\end{align*}
As in the proof of \cite[Theorem 2.2]{TaTo16}, the second
term of the right-hand side is bounded by a constant independent of $\eps$
and the first term is bounded by $(\int_0^T\int_\Omega\,\psi^2\,d\mu_t^\eps dt)^{1/2}$, which shows the desired inequality.
\end{proof}

\begin{lemma}\label{lm:velocity formula}
Let $\{\mu_t\}_{t\ge0}$ and $w$ be a family of measures and a limit phase function as above. Then there exists ${\rm v}\in L^2_{loc}(d\mu_t dt)$ such that for any $T>0$ and $\psi\in C^1_c(\Omega\times(0,T))$,
\begin{equation}\label{eq:velocity formula}
    0=\int_{0}^{T}\int_{\Omega}\psi\, {\rm v}\,d\mu_tdt+\int_{0}^{T}\int_{\Omega}w\,\partial_t\psi\, dxdt.
\end{equation}
\end{lemma}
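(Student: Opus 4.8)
The plan is to upgrade the uniform $\varepsilon$-estimate of Lemma~\ref{lm:t-derivative} to an estimate for the limits $w$ and $\{\mu_t\}_{t\ge0}$, and then read off the velocity ${\rm v}$ as the Riesz representative of a bounded linear functional on the Hilbert space $L^2(d\mu_t\,dt)$.

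Fix $T>0$. For $\psi\in C^1_c(\Omega\times(0,T))$, integration by parts in $t$ gives $\int_0^T\int_\Omega\psi\,\partial_t w^{\varepsilon}\,dxdt=-\int_0^T\int_\Omega w^{\varepsilon}\,\partial_t\psi\,dxdt$, so Lemma~\ref{lm:t-derivative} yields
\[
\Big|\int_0^T\int_\Omega w^{\varepsilon}\,\partial_t\psi\,dxdt\Big|\le C\Big(\int_0^T\int_\Omega\psi^2\,d\mu_t^{\varepsilon}\,dt\Big)^{1/2}.
\]
I would then let $\varepsilon=\varepsilon_i\downarrow 0$. The left-hand side converges to $\big|\int_0^T\int_\Omega w\,\partial_t\psi\,dxdt\big|$ since $w^{\varepsilon_i}\to w$ in $L^1_{loc}(\Omega\times[0,\infty))$ and $\partial_t\psi$ is bounded with compact support. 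For the right-hand side, $\psi^2\in C_c(\Omega\times(0,T))$, and $\mu_t^{\varepsilon_i}\to\mu_t$ as Radon measures for every $t$, hence $\int_\Omega\psi(\cdot,t)^2\,d\mu_t^{\varepsilon_i}\to\int_\Omega\psi(\cdot,t)^2\,d\mu_t$ for each $t\in(0,T)$; combining this with the uniform local mass bound $\sup_i\sup_{t\in(0,T)}\mu_t^{\varepsilon_i}(K)<\infty$ on compact sets $K$ (available from the estimates in \cite{TaTo16}) and dominated convergence in $t$, we obtain $\int_0^T\int_\Omega\psi^2\,d\mu_t^{\varepsilon_i}\,dt\to\int_0^T\int_\Omega\psi^2\,d\mu_t\,dt$. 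Therefore
\[
\Big|\int_0^T\int_\Omega w\,\partial_t\psi\,dxdt\Big|\le C\Big(\int_0^T\int_\Omega\psi^2\,d\mu_t\,dt\Big)^{1/2}\qquad\text{for all }\psi\in C^1_c(\Omega\times(0,T)).
\]

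Consequently the linear functional $\psi\mapsto -\int_0^T\int_\Omega w\,\partial_t\psi\,dxdt$ on $C^1_c(\Omega\times(0,T))$ is bounded for the $L^2(d\mu_t\,dt)$-norm. Since $C^1_c(\Omega\times(0,T))$ is dense in $L^2(d\mu_t\,dt)$ (this measure being Radon on $\Omega\times(0,T)$ by the mass bounds), the functional extends uniquely to a bounded linear functional on that Hilbert space, and the Riesz representation theorem furnishes ${\rm v}\in L^2(d\mu_t\,dt)$ with $-\int_0^T\int_\Omega w\,\partial_t\psi\,dxdt=\int_0^T\int_\Omega\psi\,{\rm v}\,d\mu_t\,dt$ for all such $\psi$, which is precisely \eqref{eq:velocity formula}. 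Performing this for each $T$ and using uniqueness of the representative on overlapping time intervals produces a single ${\rm v}\in L^2_{loc}(d\mu_t\,dt)$.

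The main obstacle is the limit passage in the quadratic term $\int_0^T\int_\Omega\psi^2\,d\mu_t^{\varepsilon_i}\,dt$: weak lower semicontinuity alone gives the inequality in the wrong direction, so one genuinely needs \emph{convergence} of this space-time integral, which is why the uniform-in-$(i,t)$ local mass bound for $\mu_t^{\varepsilon_i}$ (together with the pointwise-in-$t$ weak convergence) must be extracted from \cite{TaTo16} in order to apply dominated convergence. Everything else — the integration by parts, the $L^1_{loc}$ convergence of $w^{\varepsilon_i}$, the density of smooth functions in $L^2$ of a Radon measure, and the Riesz representation — is routine.
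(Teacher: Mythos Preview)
Your proof is correct and follows essentially the same route as the paper's: pass to the limit in the estimate of Lemma~\ref{lm:t-derivative} and then invoke the Riesz representation theorem on $L^2(d\mu_t\,dt)$. The only cosmetic difference is that the paper first realizes the weak limit of $\partial_t w^{\varepsilon_i}$ as an intermediate signed measure $\eta$ before identifying $d\eta={\rm v}\,d\mu_t\,dt$, whereas you bypass $\eta$ and apply Riesz directly to the functional $\psi\mapsto -\int w\,\partial_t\psi$.
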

\begin{proof}
Let $w^i=w^{\varepsilon_i}$ be as above. From Lemma \ref{lm:t-derivative}, by extracting a subsequence, there exists a signed measure $\eta$ on $\Omega \times (0, T)$ such that
\begin{align*}
    \lim_{i\rightarrow\infty}\int_{\Omega\times(0,T)}\psi\, \partial_t w^i \,dx dt=\int_{\Omega\times(0,T)}\psi \,d\eta.
\end{align*}
Again by Lemma \ref{lm:t-derivative} and by $d\mu_t^{\varepsilon_i}dt \to d\mu_t dt$ as $i\to \infty$, 
\begin{equation*}
    \left \vert \int_{\Omega \times (0,T)}\psi \, d\eta \right \rvert \le C\left(\int_0^T\int_{\Omega}\psi^2\,d \mu_t(x) dt\right)^{\frac{1}{2}}.
\end{equation*} 
Thus there exists ${\rm v}\in L^2_{\mathrm{loc}}(d\mu_t dt)$ such that for any $\psi \in C^1_c(\Omega \times (0,T))$, 
\begin{equation*}
    \lim_{i\rightarrow\infty}\int_{\Omega\times(0,T)}\psi\, \partial_t w^i\, dx dt=\int_{\Omega \times (0,T)}\psi \, d\eta=\int_{\Omega \times (0,T)}\psi\, {\rm v}\, d\mu_t dt.
\end{equation*}
Integrating 
\begin{align*}
    \frac{d}{dt} \int_{\Omega} \psi\, w^i \, dx 
    &=\int _{\Omega} w^i\, \partial_t \psi \, dx +\int _{\Omega} \psi \,\partial_t w^i \, dx
\end{align*}
over $(0,T)$, we have
\begin{equation*}
    0=\int^{T}_{0} \int _{\Omega} w^i\, \partial_t \psi \, dxdt +\int^{T}_{0} \int _{\Omega} \psi\, \partial_t w^i \, dxdt.
\end{equation*}
Thus, letting $i\to \infty$, we have (\ref{eq:velocity formula}) and complete the proof.
\end{proof}

Now we are ready to finish the proof of Theorem \ref{th:existence}. Suppose that ${\rm spt}\,\mu_t$ is
locally represented as a $C^{1,\alpha}$ graph $f$ as in Section
\ref{setting} and assume without loss of generality that $-1< f< 1$
on $B_1\times(0,1)$. Thus, ${\rm spt}\,\mu_t=\{(x,f(x,t))\,:\, x\in B_1\}$
and $\{(x,y)\in B_1\times (-1,1)\,:\,y<f(x,t)\}=\{(x,y)\in B_1\times(-1,1)\,:\, w(x,y,t)=1\}$
for $t\in (0,1)$. Here, we implicitly use the fact established in
\cite{TaTo16} that the phase function $w$ has the boundary of $\{w=1\}$
on the support of $\mu_t$ (see \cite[Theorem 2.3(2)]{TaTo16}) and we assume without loss of generality that
$\{w=1\}$ lies below the graph here.
Let $\psi \in C^1_c(B_1 \times (0,1))$ be arbitrary,
and let $\omega(x_{n})$ be a function such that $\omega=1$ for $|x_n|<1$ and $\omega=0$ if $|x_n|>2$ and smooth otherwise with $|\omega|\leq 1$. 
Define $\tilde \psi(x_1,\ldots,x_n,t)=\omega(x_n)\psi(x_1,\ldots,x_{n-1},t)$ so that $\tilde\psi\in C_c^1(B_1\times\mathbb R\times(0,1))$.
By Lemma \ref{lm:velocity formula}, we have 
\begin{equation}\label{eq:halfway}
    0=\int_{0}^{1}\int_{B_1\times(-2,2)}\tilde \psi\, {\rm v} \, d\mu_tdt+\int_0^1\int_{\{\, (x,y) \in B_1\times(-2,2) \colon y \le f(x,t)\}}\partial_t \tilde{\psi} \, dxdt.
\end{equation}
Since $\omega=1$ on the support of $\mu_t$, the first term of the right-hand side of \eqref{eq:halfway} is equal to 
\[
\int_{0}^{1}\int_{B_1}\tilde{\psi}(x,f(x,t),t)\, {\rm v}\, \sqrt{1+\lvert \nabla f \rvert ^2}\, dxdt=\int_0^1\int_{B_1}\psi(x,t) \,{\rm v}\, \sqrt{1+\lvert \nabla f \rvert ^2}\, dxdt.
\]
The second term of the right-hand side of \eqref{eq:halfway} is equal to 
\begin{align*}
    \int_{0}^{1}\int_{B_1}\partial_t\psi(x,t)\int_{-2}^{f(x,t)}\omega(y)\,dy dxdt=\int_0^1\int_{B_1}f(x,t)\,\partial_t\,\psi(x,t)\,dxdt.
\end{align*}
Thus we obtain
\begin{equation*}
    0=\int_{0}^{1}\int_{B_1}\psi\, {\rm v} \, \sqrt{1+|\nabla f|^2}\, dx dt+\int_{0}^{1}\int_{B_1} f\, \partial_t\psi\, dxdt.
\end{equation*}
Hence, there exists the weak derivative $\partial_t f$ of $f$ and it holds that 
\[
\partial_t f={\rm v}\sqrt{1+\lvert \nabla f \rvert^2} \in L^2_{loc}(B_1\times (0,1))
\]
and the proof is completed.

\bibliography{references1}
\bibliographystyle{plain}

\end{document}